\documentclass[a4paper]{article}
\usepackage[dvipdf]{graphicx}
\usepackage{amsthm}
%\usepack{epsfig}
\oddsidemargin=0truecm \evensidemargin=0truecm
\textheight=22.5cm \textwidth=16cm
\newtheorem{lem}{Lemma}
\newtheorem{thm}[lem]{Theorem}
\newtheorem{df}[lem]{Definition}
\newtheorem{cor}[lem]{Corollary}

\newtheorem{prop}[lem]{Proposition}
\newtheorem{ex}[lem]{Example}
\newtheorem{con}[lem]{Conjecture}

\title{Sports scheduling for not all pairs of teams}
\author{Kenji KASHIWABARA, \\
Department of General Systems Studies, University of Tokyo, \\
3-8-1 Komaba, Meguroku, Tokyo, Japan}
\date{}

\begin{document}
\maketitle

\begin{abstract}
We consider the following sports scheduling problem. Consider $2n$ teams in a sport league. Each pair of teams must play exactly one match in $2n-1$ days. That is, $n$ games are held simultaneously in a day. We want to make a schedule which has $n(2n-1)$ games for $2n-1$ days. 

When we make a schedule, the schedule must satisfy a constraint according to the HAP table, which designates a home game or an away game for each team and each date.  Two teams cannot play against each other unless one team is assigned to a home game and the other team is assigned to an away game. Recently, D. Briskorn proposed a necessary condition for a HAP table to have a proper schedule. And he proposed a conjecture that such a condition is also sufficient. That is, if a solution to the linear inequalities exists, they must have an integral solution. In this paper, we rewrite his conjecture by using perfect matchings. We  consider a monoid in the affine space generated by perfect matchings. In terms of the Hilbert basis of such a monoid, the problem is naturally generalized to a scheduling problem for not all pairs of teams described by a regular graph. In this paper, we show a regular graph such that the corresponding linear inequalities have a solution but do not have any integral solution. Moreover we discuss for which regular graphs the statement generalizing the conjecture holds.
\end{abstract}

\section{Introduction}

First, consider the situation that there are $2n$ teams in a sport league and we have to make a schedule for any pair of teams to play exactly one match in $2n-1$ days. 
$n$ games are held simultaneously everyday. We have to make a schedule which have $n(n-1)$ games in $2n-1$ days. Each team plays against every other team. Such a tournament method is called a Round Robin Tournament in the literature. 

The schedule that we consider must obey the following constraint. The schedule must be compatible with the given table which defines the stadium availability for each day and each team. Such a table is called a home and away pattern(HAP) table. A HAP table has an entry of a home game(H) or an away game(A) for each day and each team. A home game is a game at the own stadium, and an away game is a game at the stadium  of the opponent team.  For a team and a date, the HAP table gives one of H and A. We assume that two teams cannot play against each other for a day unless one team is assigned to a home game and the other team is assigned to an away game on that day. On a day, a team which is assigned to a home game is called a home team, and a team which is assigned to an away game is called an away team.

The existence of a schedule that satisfies the property above depends upon a HAP table. To begin with, we consider the following problem. What HAP table has a schedule compatible with the HAP table for any pair of teams to play against each other?
A schedule compatible with the HAP table means a schedule satisfying the constraint of the HAP table.

For example, we consider the HAP table with four teams in Table \ref{tab:hap}.

\begin{table}
\begin{center}
\begin{tabular}{|c|c|c|c|}
\hline
   & 1st day & 2nd day & 3rd day \\
\hline
team 1 & H & H & A \\
\hline
team 2 & H & A & H \\
\hline
team 3 & A & A & A \\
\hline
team 4 & A & H & H \\
\hline
\end{tabular}
\caption{HAP table for four teams}\label{tab:hap}
\end{center}
\end{table}

\vspace{4mm}

A pair $\{1,2\}$ of teams cannot play against each other on the first day because both teams are home teams on the first day. But $\{1,3\}$ can play against each other on the first day because team 1 is assigned to a home game and team 3 is assigned to an away game.

Consider the schedule such that two matches $\{1,3\},\{2,4\}$ are held on the first day, two matches $\{1,2\},\{3,4\}$ are held on the second day, and two matches $\{1,4\},\{2,3\}$ are held on the third day. This schedule satisfies the constraint that any pair of two teams which play against each other is a pair of a home team and an away team.

The problem that we consider here is a kind of sports scheduling problems. For details, see D. de
Werra\cite{Werra80,Werra82,Werra85,Werra88} and R. Miyashiro, H. Iwasaki, and T. Matsui\cite{miya}.

R. Miyashiro, H. Iwasaki, and T. Matsui\cite{miya} proposed a necessary condition for a HAP table to have a schedule compatible with the HAP table, but it is not sufficient. Recently, D. Briskorn\cite{dirk0,dirk} proposed a new necessary condition for a HAP table to have a compatible schedule. This necessary condition is described by linear inequalities.  He conjectured that such a condition is also sufficient. His conjecture is that the linear inequalities must have an integral solution whenever they have a non-integral solution. In this paper, we rewrite his conjecture in terms of perfect matchings. 
Rewriting the conjecture in terms of perfect matchings gives us a method to attack the conjecture by computer calculation. 
We confirm the conjecture for the complete graph on 6 vertices in Theorem \ref{thm:nocount6} by computing the Hilbert basis of an affine monoid generated by perfect matchings. 

By using the Hilbert basis, the problem is naturally generalized to a schedule for not all pairs of teams. While the complete graph corresponds to the scheduling problem for all pairs of teams, a regular graph corresponds to the scheduling problem for not all pairs of teams. For example, consider the league consisting of the teams in the west league and the teams in the east league, and the tournament such that each team in the west league and each team in the east league should play against each other exactly once. Such a tournament is represented by the complete bipartite graph.

We show that there exists a regular graph to which the corresponding problem has a non-integral solution but does not have any integral solution. That is, the statement generalizing the conjecture does not hold for some graphs. We discuss which regular graph satisfies the statement generalizing the conjecture. We show that any antiprism on  even vertices does not satisfy the statement generalizing the conjecture in Theorem \ref{thm:antiprism}. We also give a cubic bipartite graph which does not satisfy the statement generalizing the conjecture in Example \ref{ex:cubicbib}.

\section{Basic formulation}

\subsection{Formulation of scheduling problems}

Let $V$ be the finite set of vertices of a graph that we consider. $V$ is interpreted as the set of teams in the sports league. $|V|$ is always assumed to be even.

A partition of a set that consists of two sets of the same size is called an equal partition in this paper. The set of all the equal partitions on $V$ is denoted by $C=C(V)$. 
It is not important which indicates home games and which indicates away games in the two sets because that is irrelevant to whether there exists a solution or not. An equal partition can be identified with a complete bipartite graph which has two partite sets of the same size. We denote the complete bipartite graph corresponding to $c\in C$ by $B_c$. 

Let $K:= {V \choose 2}$, which is identified with the edges of the complete graph on $V$. We consider the linear space ${\mathbf R}^{K\cup C}$ of dimension $|K\cup C|$. For $v \in {\mathbf R}^{K\cup C}$, the components of $v$ in $K$ are called the edge components and the components of $v$ in $C$ are called the HA components. For $v\in {\mathbf R}^{K\cup C}$, $v|_K$ is defined to be the vector restricted to the edge components.

Denote $E(v)=\{\{a,b\}\in K|v(\{a,b\})=1\}$.

For graph $G=(V,E)$ and $c\in C$, we introduce a vector $\chi_{E,c}\in {\mathbf N}^{K\cup C}$ defined as follows, where $\mathbf N$ is the set of nonnegative integers.

For $e\in K$, let

$$\chi_{E,c}(e)=\left\{
\begin{array}{c}
1 ... e\in E(G)\\
0 ... e\notin E(G)
\end{array}
\right..$$

For $f\in C$, let

$$\chi_{E,c}(f)=\left\{
\begin{array}{c}
1 ... f=c\\
0 ... f\neq c
\end{array}
\right..$$

$\chi_E$ means the vector obtained by the restriction of $\chi_{E,c}$ to the edge components. That it, $\chi_E$ is the characteristic function of edges $E$.

Let $$PM(V)=\{\chi_{q,c}|q\mbox{ is a perfect matching of }B_c \mbox{ for some }c\in C\}.$$

Recall that $B_c$ is the complete bipartite graph whose partite sets are $c\in C$. $PM(V)$ plays an important role in this paper.
Note that $PM(V)$ does not depend upon a certain graph but only upon $V$.
We also denote $\chi_{q,c} \in PM(V)$ by $(q,c)\in PM(V)$ for simplicity.

\begin{ex}
The size of $PM(\{1,2,3,4\})$ is 6. Figure \ref{fig:pm4} illustrates all six vectors in $PM(\{1,2,3,4\})$. For example, the first figure illustrates  vector $v\in PM(\{1,2,3,4\})$ such that $v(\{\{1,2\},\{3,4\}\})=1$ and $v(\{1,3\})=v(\{2,4\})=1$ and the value of $v$ on any other set takes 0.
\begin{figure}[ht]
\begin{center}
\includegraphics[width=12cm]{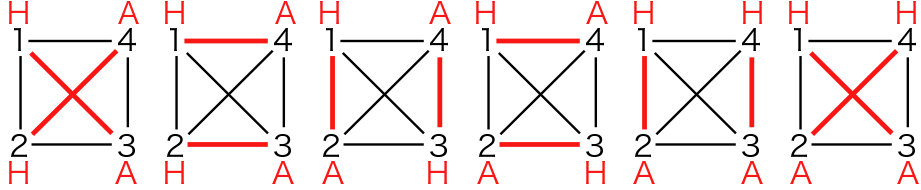}
\caption{Elements of $PM(\{1,2,3,4\})$}\label{fig:pm4}
\end{center}
\end{figure}

\end{ex}

We confine ${\mathbf N}^{K\cup C}$ to more meaningful vectors as follows.

Let 
$$\mbox{problem}(V)=\{v\in {\mathbf N}^{K\cup C}|\ v|_K \leq \chi_K,E(v)\mbox{ is a regular graph,}\sum_{c\in C}v(c)\times \frac{|V|}{2}=\sum_{e\in K}v(e)\},$$
where $\chi_K$ is the function whose value is 1 on any pair of vertices. 
Recall that an $r$-regular graph is a graph where each vertex has $r$ neighbors.

The last condition of the definition of problem$(V)$ is to balance between the HA components and edges components. This condition is required by the assumption that $\frac{|V|}{2}$ games are held in a day. By definition, $PM(V)\subset \mbox{problem}(V)$ holds. Moreover, note that the linear combinations $v$ of $PM(V)$ with $v|_K \leq \chi_K$ are also included in $\mbox{problem}(V)$.

For $v\in \mbox{problem}(V)$, $v|_K$ gives the pairs of teams to play against each other and $v|_C$ gives the HAP table because $v(c)$ indicates the number of appearances of equal partition $c\in C$ in the HAP table. So giving $v\in \mbox{problem}(V)$ means giving a scheduling problem to consider.

\begin{ex}
We consider a scheduling problem for four teams and three days with the HAP table in Table \ref{tab:hap}. The vector $v\in \mbox{problem}(\{1,2,3,4\})$ corresponding to this HAP table is given by
$$v(\{1,2\})=v(\{1,3\})=v(\{1,4\})=v(\{2,3\})=v(\{2,4\})=v(\{3,4\})=1,$$
$$v(\{\{1,2\},\{3,4\}\})=v(\{\{1,4\},\{2,3\}\})=v(\{\{1,3\},\{2,4\}\})=1.$$
\end{ex}

\subsection{Two monoids}

For ${\mathcal M}\subset {\mathbf N}^{K\cup C}$,  ${\mathbf N}({\mathcal M})$ denotes the set of all nonnegative integral combinations of ${\mathcal M}$. That is, 

$${\mathbf N}({\mathcal M})=\{\sum k_ip_i|k_i \in {\mathbf N},p_i\in {\mathcal M}\},$$

where ${\mathbf N}$ is the set of non-negative integers.

When $v\in {\mathbf N}(PM(V))\cap\mbox{problem}(V)$ is given, we can associate $d\in \{1,\ldots,r\}$ with $p_d\in PM(V)$ so that the HA component of $p_d$ is $\{H(d),A(d)\}$ where $v=\sum_{p_d\in PM(V)} k_dp_d$. 
For $v\in\mbox{problem}(V)$, we can interpret $v\in {\mathbf N}(PM(V))$ as indicating that there exists a schedule compatible with the HAP table for the pairs of teams given by $E(v)$. For $v\in {\mathbf N}(PM(V))\cap\mbox{problem}(V)$, we say that $r$-regular graph $E(v)$ is $r$-edge-colorable compatible with the HAP table because, for every $d\in \{1,\ldots,r\}$, each vertex is incident with exactly one edge in perfect matching $q_d$ with $p_d=(q_d,\{H(d),A(d)\})$ using the correspondence induced by $v$.

Let $\overline{\mathbf N}({\mathcal M})$ be defined as 
$$\overline{\mathbf N}({\mathcal M})=\{v\in {\mathbf N}^{K\cup C}|k v\in {\mathbf N}({\mathcal M}) \mbox{ for some }k \geq 1\}.$$

We always take $PM(V)$ as ${\mathcal M}$ in this paper. By definition, we have ${\mathbf N}(PM(V))\subset \overline{\mathbf N}(PM(V))$. $\overline{\mathbf N}(PM(V))$ is the set of integral points in the convex hull of ${\mathbf N}(PM(V))$.

Note that ${\mathbf N}(PM(V))$ and $\overline{\mathbf N}(PM(V))$ do not depend upon a certain graph but only upon $V$.

Moreover, note that both of ${\mathbf N}(PM(V))$ and $\overline{\mathbf N}(PM(V))$ are closed under addition with $0$. That is, both are monoids in ${\mathbf N}^{K\cup C}$.

\begin{prop}
If $v \in \overline{\mathbf N}(PM(V))$ and $v|_K \leq \chi_K$, $v\in \mbox{problem}(V)$ holds.
\end{prop}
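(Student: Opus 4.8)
My plan is to unwind the definitions and verify directly the three conditions defining $\mbox{problem}(V)$: that $v\in{\mathbf N}^{K\cup C}$, that $E(v)$ is a regular graph, and that the balance identity $\sum_{c\in C}v(c)\cdot\frac{|V|}{2}=\sum_{e\in K}v(e)$ holds. The membership $v\in{\mathbf N}^{K\cup C}$ is immediate, since $\overline{\mathbf N}(PM(V))\subset{\mathbf N}^{K\cup C}$ by definition. I would then fix, again by the definition of $\overline{\mathbf N}$, an integer $k\ge 1$ with $kv\in{\mathbf N}(PM(V))$ and write $kv=\sum_i\ell_i\,\chi_{q_i,c_i}$ with $\ell_i\in{\mathbf N}$ and each $(q_i,c_i)\in PM(V)$, so that $q_i$ is a perfect matching of $B_{c_i}$. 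Since $v$ is integral and $v|_K\le\chi_K$, every edge component of $v$ lies in $\{0,1\}$; hence $v|_K=\chi_{E(v)}$, and it is meaningful to speak of the degrees of the vertices in $E(v)$.

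For the balance identity I would count weight in two ways in the expression $kv=\sum_i\ell_i\chi_{q_i,c_i}$. Each generator $\chi_{q_i,c_i}$ has exactly one nonzero HA component, equal to $1$, and exactly $|V|/2$ nonzero edge components (the edges of $q_i$), each equal to $1$. Summing over the HA components gives $\sum_{c\in C}(kv)(c)=\sum_i\ell_i$, and summing over the edge components gives $\sum_{e\in K}(kv)(e)=\frac{|V|}{2}\sum_i\ell_i$; dividing both by $k$ yields $\sum_{e\in K}v(e)=\frac{|V|}{2}\sum_{c\in C}v(c)$. Note that this part uses only $v\in\overline{\mathbf N}(PM(V))$, not the hypothesis $v|_K\le\chi_K$.

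For regularity I would fix a vertex $w\in V$. In the perfect matching $q_i$ of $B_{c_i}$, the vertex $w$ is incident with exactly one edge, so $\sum_{e\in K,\,w\in e}\chi_{q_i,c_i}(e)=1$ for every $i$, whence $\sum_{e\ni w}(kv)(e)=\sum_i\ell_i$, a quantity independent of $w$. Dividing by $k$ and using $v|_K=\chi_{E(v)}$, the degree of $w$ in $E(v)$ equals $\frac1k\sum_i\ell_i$ for every $w\in V$, so $E(v)$ is $\bigl(\frac1k\sum_i\ell_i\bigr)$-regular. I do not expect a genuine obstacle here: the statement is essentially bookkeeping, resting on the facts that a perfect matching of $B_c$ is a $1$-regular subgraph on $V$ carrying exactly one unit of HA weight, and the hypothesis $v|_K\le\chi_K$ enters only to force the edge components of $v$ into $\{0,1\}$ so that $E(v)$ faithfully records $v|_K$.
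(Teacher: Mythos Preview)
Your argument is correct and follows essentially the same approach as the paper: both verify the balance identity and regularity by exploiting that each generator $\chi_{q,c}\in PM(V)$ carries exactly one unit of HA weight and is $1$-regular on the edge side, with $v|_K\le\chi_K$ used only to force $v|_K=\chi_{E(v)}$. Your version simply makes the degree computation explicit where the paper's proof leaves it as a one-line assertion.
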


\begin{proof}
$v\in PM(V)$ satisfies $\sum_{c\in C}v(c)=\sum_{e\in K}v(e)\times 2/|V|$.
Moreover, $E(v)$ is a 1-regular graph for $v\in PM(V)$. $\overline{\mathbf N}(PM(V))$ is generated by nonnegative combinations of $PM(V)$. So, for $v \in \overline{\mathbf N}(PM(V))$, we have $\sum_{c\in c}v(c)=\sum_{e\in K}v(e)\times 2/|V|$. Since $v|_K \leq \chi_K$, $E(v)$ is a regular graph.
\end{proof}

\subsection{B-factorizability}

%Note that the converse statement does not hold. That is, for some regular graph $G$, there exists no $v\in \overline{\mathbf N}(PM(V))$ such that $v|_K=\chi_E$ since it is known that some regular graph has no perfect matching.

%In our setting of the scheduling problem, a regular graph gives us the pairs of teams to play against each other. 

\begin{df}
A regular graph $G=(V,E)$ is called B-factorizable if any $v\in \overline{\mathbf N}(PM(V))$ with $v|_K=\chi_E$ satisfies $v\in {\mathbf N}(PM(V))$.
\end{df}

In other words, an $r$-regular graph is a B-factorizable graph when, for any $v \in \mbox{problem}(V)$ such that $E(v)$ coincides with the graph, $kv\in {\mathbf N}(PM(V))$ for some $k\geq 1$ implies that there exists an $r$-edge-coloring compatible with the HAP table of $v$.

Note that a B-factorizable graph $(V,E)$ satisfies
$$\{v\in \overline{\mathbf N}(PM(V)):\ v|_K=\chi_E\}=\{v\in {\mathbf N}(PM(V)):\ v|_K=\chi_E\}.$$

It is easy to see that a 2-regular graph is B-factorizable although a 2-regular graph with an odd cycle has no $v\in \overline{\mathbf N}(PM(V))$ with $v|_K=\chi_E$.

The next conjecture is equivalent to Conjecture \ref{con:briskorn}, proposed by D. Briskorn. The equivalence will be proved in Corollary \ref{cor:main}.

\begin{con}\label{con:dirk}
The complete graph $K_{|V|}$ is B-factorizable.
\end{con}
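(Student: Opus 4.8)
The plan is to recast B-factorizability of $K_{|V|}$ as an integer-decomposition property of the graded affine monoid generated by $PM(V)$, verify it by a Hilbert-basis computation in the small cases that are within reach, and attack the general statement by a structural descent. Throughout write $|V|=2n$ and grade the monoid by $\deg(v)=\sum_{c\in C}v(c)$, so that every generator in $PM(V)$ has degree $1$. A vector with $v|_K=\chi_K$ necessarily has $\deg(v)=2n-1$, so the conjecture is exactly the assertion that, restricted to the single slice $v|_K=\chi_K$, the monoids $\overline{\mathbf N}(PM(V))$ and $\mathbf N(PM(V))$ agree. Writing $v|_C=\mu$ for the multiset of equal partitions prescribed by the HAP table, the point $(\chi_K,\mu)$ is determined entirely by $\mu$, so the question reduces to: for every $\mu$ admitting a fractional schedule, i.e. with $(\chi_K,\mu)\in\overline{\mathbf N}(PM(V))$, does an integral $1$-factorization of $K_{2n}$ into $B_c$-compatible perfect matchings exist, i.e. $(\chi_K,\mu)\in\mathbf N(PM(V))$?

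The finitely checkable part of the plan is the Hilbert-basis computation. I would compute the Hilbert basis of the saturated monoid $\overline{\mathbf N}(PM(V))$ with a tool such as Normaliz or 4ti2; this yields a finite description of $\overline{\mathbf N}(PM(V))$. Because $v|_K=\chi_K$ pins the degree to $2n-1$ and forces each coordinate $v(c)\le 2n-1$, the slice $\{v\in\overline{\mathbf N}(PM(V)):v|_K=\chi_K\}$ is a finite set of lattice points; for each of these I would test whether it admits an integral decomposition into $PM(V)$, which is itself a finite integer-feasibility check. For $|V|=6$ this terminates, and every point in the slice passes, establishing the conjecture in that case; this is Theorem \ref{thm:nocount6}. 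The huge number of candidate equal partitions can be cut down first by exploiting the action of the symmetric group on $V$, which permutes $PM(V)$, so that only one $\mu$ per orbit need be examined.

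For a general proof I would attempt a descent on the slice: given $v\in\overline{\mathbf N}(PM(V))$ with $v|_K=\chi_K$, peel off one compatible perfect matching $(q,c)\in PM(V)$, corresponding to one day of play, so that $v-(q,c)$ remains in $\overline{\mathbf N}(PM(V))$, and iterate. The first obstacle is that after removing a factor the edge part becomes $\chi_E$ for $E=K\setminus q$, which is a $(2n-2)$-regular graph that is no longer complete; since the paper's own examples (Theorem \ref{thm:antiprism} and Example \ref{ex:cubicbib}) show that B-factorizability fails for some regular graphs, a naive induction routed through non-complete regular graphs cannot succeed, and any valid descent must choose $(q,c)$ so that the residual graph, though incomplete, still carries the required factorization.

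I expect the deeper obstacle to be the coupling across days. Within a single day $d$ the constraint is bipartite: the admissible matchings are precisely the perfect matchings of $B_{c_d}$, so a Birkhoff--von Neumann argument rounds the fractional weights on that day to integral matchings. What resists this is the global constraint that the chosen matchings partition $K$, using every edge exactly once; this couples the $2n-1$ days and destroys the total unimodularity that would otherwise force integrality, and it is exactly where proper edge-colouring problems develop integrality gaps. The heart of a general proof must therefore locate the feature of the complete graph --- presumably its abundance of $1$-factorizations together with its full automorphism group --- that forces every feasible fractional HAP table to round to an integral one, and use it to steer the descent above through residual graphs that remain factorizable.
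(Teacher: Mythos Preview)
The statement you are addressing is labelled a \emph{Conjecture} in the paper, not a theorem, and the paper offers no proof of it in general; it establishes only the case $|V|=6$ (Theorem~\ref{thm:nocount6}) via the Hilbert-basis computation you describe, together with a case check over the ten equal partitions that can occupy the fifth day. Your proposal accurately reconstructs that computational pipeline and is aligned with the paper's method for the small case.

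For the general statement, however, what you have written is a research plan rather than a proof, and you say so yourself. The descent you sketch---peel off one compatible perfect matching and recurse---runs immediately into the obstacle you already name: the residual graph is a $(2n-2)$-regular graph that need not be B-factorizable (indeed the paper exhibits the octahedron, the antiprisms, and a cubic bipartite example as failures), so the inductive hypothesis is unavailable. You correctly diagnose the global coupling across days as the source of the integrality gap, but you do not supply any mechanism---no choice rule for $(q,c)$, no invariant preserved under the peel, no structural lemma about $K_{2n}$---that would steer the descent through factorizable residuals. Absent such a mechanism, the proposal does not close the gap between $\overline{\mathbf N}(PM(V))$ and $\mathbf N(PM(V))$ on the slice $v|_K=\chi_K$, and the conjecture remains open exactly as the paper leaves it.
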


When $|V|=4$, the conjecture holds because of $\overline{\mathbf N}(PM(V))={\mathbf N}(PM(V))$.

\section{Briskorn's conjecture}

\subsection{Briskorn's conjecture}

We state the conjecture proposed by D. Briskorn\cite{dirk0,dirk}. D. Briskorn considered the scheduling problem for all pairs of teams, but we generalize his framework to a scheduling problem for the pairs of teams given by a regular graph.

For a set $V$ of teams, consider $r$-regular graph $(V,E)$ on $V$.   Consider a schedule for $r$ days. A HAP table for $r$ days is given in the form of  the following pair of functions $H$ and $A$.
$$(H,A):\{1,\ldots,r\}\to C\quad (d\mapsto \{H(d),A(d)\}).$$
Recall that $C$ is the set of the equal partitions on $V$.

We consider $x_{\{a,b\},d}$ as a variable for pair $\{a,b\}\in K$ of teams and date $d\in \{1,\ldots,r\}$. 
A variable $x_{\{a,b\},d}$ takes 1 when team $a$ and team $b$ play against  each other on $d$-th day, and takes 0 when they do not.

We suppose that $x$ must satisfy the following conditions.

\begin{enumerate}
\item For any $\{a,b\}\in E$,
$$\sum_{d\in \{1,\ldots,r\}}x_{\{a,b\},d}=1.$$
For any $\{a,b\} \notin E$, $x_{\{a,b\},d}=0$.
\item For any $d\in \{1,\ldots,r\}$ and any $a\in V$,
$$\sum_{b\in V:b\neq a} x_{\{a,b\},d}=1.$$
\item For any $d\in \{1,\ldots,r\}$ and any $\{a,b\}\in E$,
$$0 \leq x_{\{a,b\},d} \leq 1.$$
\item For any $d\in \{1,\ldots,r\}$ and any $\{a,b\}\in E$,
$(a,b)\in (H(d)\times H(d))\cup (A(d)\times A(d))$ implies $x_{\{a,b\},d}=0$.
\end{enumerate}

Condition 1 is interpreted as that every pair of teams given by $E$ should play against each other exactly once.
Condition 2 is interpreted as that, on any date, any team plays against another team exactly once.
Condition 3 is to relax the integer programming problem to the linear programming problem.
Condition 4 is interpreted as that any pair of teams assigned to both H or both A cannot play against each other.

The conditions above are all described by linear inequalities, so the set of solutions which satisfy the linear inequalities forms a polytope. 
This polytope is determined by a graph and a HAP table, so we write this polytope as $P(G,HA)$.

When $P(G,HA)$ has an integral point, its components consist of 0 and 1 by Condition 3. So such a solution gives a desired schedule. In that schedule, team $a$ and team $b$ play against each other on $d$-th day when $x_{\{a,b\},d}=1$.

The necessary condition, proposed by D. Briskorn, for a HAP table to have a proper schedule is that $P(K_{|V|},HA)$ is non-empty. He conjectured that this necessary condition is also sufficient\cite{dirk0,dirk}.

\begin{con}\label{con:briskorn}
Consider the complete graph $K_{|V|}$. For any HAP table, if $P(K_{|V|},HA)$ is non-empty, it must contain an integral point.
\end{con}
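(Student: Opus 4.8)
The plan is to recast Conjecture \ref{con:briskorn} in the monoid language introduced above and then attack the recast problem; the first concrete step is to identify the conjecture with B-factorizability of $K_{|V|}$, which is the content of Corollary \ref{cor:main}. To a HAP table $(H,A)$ for $r = |V|-1$ days assign the vector $v \in \mathbf N^{K \cup C}$ with $v|_K = \chi_K$ and $v(c) = |\{d \in \{1,\dots,r\} : \{H(d),A(d)\} = c\}|$ for $c \in C$. I would show that $P(K_{|V|},HA) \neq \emptyset$ if and only if $v \in \overline{\mathbf N}(PM(V))$, and that $P(K_{|V|},HA)$ contains an integral point if and only if $v \in \mathbf N(PM(V))$. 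For the forward direction of the first equivalence, choose a rational point $x$ of the polytope (possible since it is defined over $\mathbf Q$); Conditions 2 and 4 say that for each fixed day $d$ the slice $(x_{\{a,b\},d})_{\{a,b\} \in K}$, read on the complete bipartite graph $B_{\{H(d),A(d)\}}$, is a doubly stochastic matrix, hence by the Birkhoff--von Neumann theorem a rational convex combination of perfect matchings of $B_{\{H(d),A(d)\}}$; taking a common denominator $k$ over all days and summing over $d$, Condition 1 gives $kv \in \mathbf N(PM(V))$, so $v \in \overline{\mathbf N}(PM(V))$. The converse runs this backwards: given $kv = \sum_\alpha \chi_{q_\alpha,c_\alpha} \in \mathbf N(PM(V))$, distribute the $k\,v(c)$ summands with HA component $c$ evenly among the $v(c)$ days carrying the equal partition $c$, and let $x_{\cdot,d}$ be $1/k$ times the sum of the $k$ matchings allotted to day $d$; Conditions 1--4 are then immediate. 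Finally, an element $v \in \mathbf N(PM(V))$ with $v|_K = \chi_K$ is exactly a $1$-factorization of $K_{|V|}$ together with a choice of compatible equal partition on each factor, i.e. an integral point of $P(K_{|V|},HA)$, and conversely. Hence Conjectures \ref{con:briskorn} and \ref{con:dirk} are equivalent.

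There remains Conjecture \ref{con:dirk}: every $v \in \overline{\mathbf N}(PM(V))$ with $v|_K = \chi_K$ already lies in $\mathbf N(PM(V))$. I would attack this through the Hilbert basis $\mathcal H$ of the affine monoid $\overline{\mathbf N}(PM(V))$, its finite set of irreducible elements, so that every element of the monoid is a nonnegative integer combination of members of $\mathcal H$. The leverage is that such a $v$ has $0/1$ edge components while every element of $PM(V)$, hence of $\mathcal H$, has nonnegative edge components and nonempty edge support; this should sharply limit which Hilbert basis elements can appear in a representation of a $v$ with $v|_K = \chi_K$, and one then wants to rewrite any such representation as a nonnegative integer combination of members of $PM(V)$ itself. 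For each fixed $|V|$ this is a finite computation: determine $\mathcal H$ (for instance with Normaliz), list its elements $h$ with $h|_K \le \chi_K$, and check that each lies in $\mathbf N(PM(V))$ --- this is precisely the route that yields Theorem \ref{thm:nocount6} for $K_6$.

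The hard part is to make the argument uniform in $|V|$ rather than settle one value at a time, and the difficulty is genuine rather than merely technical: the paper shows that the analogue of Conjecture \ref{con:dirk} fails for antiprisms on an even number of vertices (Theorem \ref{thm:antiprism}) and for a cubic bipartite graph (Example \ref{ex:cubicbib}), so no graph-independent reasoning can deliver $\overline{\mathbf N}(PM(V)) \cap \{v : v|_K = \chi_K\} \subseteq \mathbf N(PM(V))$; any proof must use structure special to $K_{|V|}$, presumably its high degree and connectivity and the abundance and symmetry of its $1$-factorizations. A plausible strategy is to show that a hypothetical irreducible $h \in \mathcal H$ with $h|_K \le \chi_K$ but $h \notin \mathbf N(PM(V))$ would force its edge support to contain a configuration of the antiprism or cubic-bipartite type, and then to exclude such configurations inside $K_{|V|}$. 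Carrying this out amounts to classifying the primitive obstructions to B-factorizability, and that classification is, I expect, where the real work lies --- and why the statement remains a conjecture.
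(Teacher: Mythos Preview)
The statement is a conjecture, and the paper does not prove it; you correctly recognise this and instead establish the equivalence with Conjecture~\ref{con:dirk}. Your equivalence argument is essentially the paper's own (Lemmas~\ref{lem:nonint} and~\ref{lem:int}): the forward direction uses the bipartite perfect-matching polytope (your Birkhoff--von Neumann is the paper's ``well-known arguments about \ldots\ matching polytopes for bipartite graphs''), and the converse redistributes the summands of $kv$ among the days. Your explicit appeal to a rational point and a common denominator is a clean way to land in $\overline{\mathbf N}(PM(V))$; the paper is slightly looser at that spot but means the same thing.

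One correction is needed in your account of the $K_6$ case. You write that the route to Theorem~\ref{thm:nocount6} is to list the Hilbert basis elements $h$ with $h|_K \le \chi_K$ and check that each lies in $\mathbf N(PM(V))$. For $|V|=6$ that check \emph{fails}: the computation produces $90$ additional generators (all isomorphic, with $E(h)$ the octahedron), and by definition these do \emph{not} lie in $\mathbf N(PM(V))$. The paper's actual argument is one level deeper: by Lemma~\ref{lem:countha}, any counterexample $v$ with $v|_K=\chi_{K_6}$ would dominate some additional generator $v'$, so the HAP table of $v$ must contain the four HA components of the octahedron generator plus one further equal partition; one then checks, for each of the ten choices of that extra partition, that the resulting $v$ \emph{does} lie in $\mathbf N(PM(V))$. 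So the finite verification is not ``every small Hilbert element is already in $PM(V)$'' but rather ``no additional generator can be completed to a counterexample on $K_{|V|}$.'' Your later remarks about primitive obstructions are in fact closer to what the paper actually does than your summary of the $K_6$ computation.
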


\subsection{Equivalence between two conjectures}

We want to rewrite this conjecture in terms of perfect matchings. We prove the equivalence between Conjecture \ref{con:dirk} and Conjecture \ref{con:briskorn} in Corollary \ref{cor:main}.

\begin{lem}\label{lem:nonint}
For regular graph $G=(V,E)$ and any HAP table, $P(G,HA)$ is non-empty if and only if there exists $v\in \overline{\mathbf N}(PM(V))$
 such that $v|_K=\chi_E$ and $v(c)=|\{d\in \{1,\ldots,r\}:c=\{H(d),A(d)\}\}|$ for any $c\in C$.
\end{lem}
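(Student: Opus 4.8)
The plan is to establish a bijective-style correspondence between points of the polytope $P(G,HA)$ and rational points of the monoid $\overline{\mathbf N}(PM(V))$ with the prescribed edge and HA components, exploiting the classical fact that the fractional perfect-matching polytope of a bipartite graph has only integral vertices (Birkhoff--von Neumann). The key observation is that a point $x\in P(G,HA)$ records, for each day $d$, a doubly stochastic assignment on the partite sets $\{H(d),A(d)\}$: Condition 2 forces every team to be matched with total weight $1$ on day $d$, and Condition 4 forces all this weight to lie on edges between $H(d)$ and $A(d)$, i.e.\ on edges of $B_{\{H(d),A(d)\}}$.

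First I would set up the forward direction. Given $x\in P(G,HA)$, fix a day $d$ and let $c_d=\{H(d),A(d)\}$. Conditions 2, 3 and 4 say precisely that the restriction $x_{\cdot,d}$, viewed as a function on the edges of the complete bipartite graph $B_{c_d}$ (extended by $0$ off $E$), is a fractional perfect matching of $B_{c_d}$: nonnegative, with all vertex-degrees equal to $1$. By Birkhoff--von Neumann it is a convex combination $\sum_j \lambda^{(d)}_j \chi_{q^{(d)}_j}$ of perfect matchings $q^{(d)}_j$ of $B_{c_d}$, with $\lambda^{(d)}_j\ge 0$ and $\sum_j\lambda^{(d)}_j=1$. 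Now form $w=\sum_{d=1}^r\sum_j \lambda^{(d)}_j\,\chi_{q^{(d)}_j,c_d}\in {\mathbf R}_{\ge 0}(PM(V))$. Its HA component at $c$ is $\sum_{d:c_d=c}\sum_j\lambda^{(d)}_j=|\{d:c=\{H(d),A(d)\}\}|$, as required, and its edge component is $\sum_d x_{\cdot,d}=\chi_E$ by Condition 1. Clearing denominators of the (rational — one may take $x$ rational since $P(G,HA)$ is a rational polytope, or argue on a rational point near $x$; in fact for the ``non-empty'' statement it suffices to produce some such $v$, so one can take a vertex of $P(G,HA)$, which is rational) coefficients $\lambda^{(d)}_j$ by a common integer $k\ge 1$ gives $kw\in {\mathbf N}(PM(V))$, hence $w\in\overline{\mathbf N}(PM(V))$; set $v=w$.

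For the converse, suppose $v\in\overline{\mathbf N}(PM(V))$ with $v|_K=\chi_E$ and $v(c)=|\{d:c=\{H(d),A(d)\}\}|$ for all $c$. Pick $k\ge 1$ with $kv=\sum_i\chi_{q_i,c_i}\in{\mathbf N}(PM(V))$, a multiset of $kv(c)$ pairs $(q_i,c)$ for each $c$. Since $kv(c)=k\cdot|\{d:c_d=c\}|$, for each day $d$ I can allocate exactly $k$ of the summands with HA component $c_d$ to day $d$ (partitioning the multiset day by day); let $S_d$ be the resulting multiset of $k$ perfect matchings of $B_{c_d}$. Define $x_{e,d}=\frac1k|\{i\in S_d: e\in q_i\}|$ for $e\in E$ and $x_{e,d}=0$ otherwise. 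Then $0\le x_{e,d}\le 1$ (Condition 3); each $q_i$ in $S_d$ is a perfect matching of $B_{c_d}\subseteq B_{c_d}$ so $\sum_{b\ne a}x_{\{a,b\},d}=\frac1k\cdot k=1$ (Condition 2) and the weight sits only between $H(d)$ and $A(d)$ (Condition 4); finally $\sum_d x_{e,d}=\frac1k\sum_i [e\in q_i]=\frac1k(kv)(e)=v(e)=\chi_E(e)$, giving Condition 1. Hence $x\in P(G,HA)$, which is therefore non-empty.

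I expect the main obstacle to be the bookkeeping in the converse: one must check that the day-by-day partition of the multiset of generators is consistent, i.e.\ that the counts match so that each day receives exactly the right number ($k$) of matchings with HA component equal to $c_d$ — this is exactly what the hypothesis $v(c)=|\{d:c=\{H(d),A(d)\}\}|$ guarantees, but it must be spelled out, and one should note that distinct days $d,d'$ may share the same equal partition $c_d=c_{d'}$, in which case the allocation among them is an arbitrary but legitimate choice. On the forward side the only subtlety is the rationality/denominator-clearing step, handled by working with a vertex of the rational polytope $P(G,HA)$.
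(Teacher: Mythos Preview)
Your proof is correct and follows essentially the same approach as the paper: both directions hinge on the Birkhoff--von Neumann decomposition of each day's fractional matching in $B_{\{H(d),A(d)\}}$, and the converse reconstructs a point of $P(G,HA)$ from a decomposition of (a multiple of) $v$ into elements of $PM(V)$. The only cosmetic differences are that you are more explicit about rationality (taking a rational vertex of $P(G,HA)$ to clear denominators), and for the converse you allocate $k$ matchings to each day and average, whereas the paper writes down the closed formula $x_{\{a,b\},d}=\bigl(\sum_{p\in PM(V),\,p(\{a,b\})=1,\,p(\{H(d),A(d)\})=1}k_p\bigr)/v(\{H(d),A(d)\})$, which amounts to the equal-share allocation among days sharing the same equal partition.
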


\begin{proof}
Let $x\in P(G,HA)$. So $x$ satisfies Conditions 1, 2, 3, and 4. By Conditions 2, 3, 4 and well-known arguments about defining inequalities of matching polytopes for bipartite graphs\cite{LP}, for any $d\in \{1,\ldots,r\}$, $x_{\cdot,d}$ is contained in the convex-hull of the perfect matchings of the complete bipartite graph whose partite sets are $\{H(d),A(d)\}$. Therefore, it can be represented by a convex combination of perfect matchings of the bipartite graph. We can write $x_{\{a,b\},d}=
(\sum_{q:(q,\{H(d),A(d)\})\in PM(V)} s_{q,d} {\bf q})(\{a,b\})$ where $s_{q,d}$ is a nonnegative real number. ${\bf q}$ means the characteristic function $\chi_q$ of a perfect matching $q$ for simplicity. Since $(q,\{H(d),A(d)\})\in PM(V)$, $q$ is a perfect matching compatible with the HAP table on $d$-th day. Then 
$$x_{\{a,b\},d}=\sum_{(q,\{H(d),A(d)\})\in PM(V)} s_{q,d} {\bf q}(\{a,b\})=\sum_{{(q,\{H(d),A(d)\})\in PM(V)}\atop {q(\{a,b\})=1}}s_{q,d}.$$

We want to take $v$ so that $v|_K=\chi_E$ and $v(c)=|\{d\in \{1,\ldots,r\}:c=\{H(d),A(d)\}\}|$ for any $c\in C$. 
We define $k_p = k_{(q,c)}:= \sum_{d:\{H(d),A(d)\}=c} s_{q,d}$ for $p=(q,c)\in PM(V)$. Let $v=\sum_{p\in PM(V)} k_p p$. 
We make sure that $v$ satisfies the desired condition.

Then the value of $v(\{a,b\})$ for $\{a,b\}\in K$ is
$$(\sum_{p\in PM(V)} k_p p)(\{a,b\})=\sum_{(q,c)\in PM(V)\atop {q(\{a,b\})=1}} k_{(q,c)} = \sum_{d\in \{1,\ldots,r\} \atop{q(\{a,b\})=1}} s_{q,d} = \sum_{d\in \{1,\ldots,r\}} x_{\{a,b\},d}.$$
When $\{a,b\}\in E$, this value is 1 by Condition 1. When $\{a,b\}\notin E$, this value is 0 by Condition 1. So we have $v|_K=\chi_E$.

On the other hand, for $c\in C$, $v(c)$ is 
\begin{eqnarray*}
(\sum_{p\in PM(V)}k_p p)(c) &=& \sum_{q:(q,c)\in PM(V)}k_{(q,c)}=\sum_{{q:(q,c)\in PM(V)}\atop{d:\{H(d),A(d)\}=c}} s_{q,d}=\sum_{d:\{H(d),A(d)\}=c} \sum_{b:q(\{a,b\})=1\atop{q:(q,c)\in PM(V)}}s_{q,d}{\bf q}(\{a,b\})\\&=&\sum_{b\in V-\{a\} \atop {d:\{H(d),A(d)\}=c}}x_{\{a,b\}, d}
 = |\{d\in \{1,\ldots,r\}:c=\{H(d),A(d)\}\}|,
\end{eqnarray*} 
by Condition 2, where $a\in V$ is an arbitrary fixed element.

Conversely, we assume that there exists $v\in \overline{\mathbf N}(PM(V))$ such that $v|_K=\chi_E$ and $v(c)=|\{d\in \{1,\ldots,r\}:c=\{H(d),A(d)\}\}|$. We may write $v=\sum_{p\in PM(V)} k_p p$ because of $v\in \overline{\mathbf N}(PM(V))$. We define 
\begin{eqnarray*}\label{eqn:star}
x_{\{a,b\},d}=\sum_{{p\in PM(V),p(\{a,b\})=1}\atop{p(\{H(d),A(d)\})=1}}k_p/v(\{H(d),A(d)\}).
\end{eqnarray*}
Note that the denominator cannot be 0 because of the assumption of $v(c)$. We show that $x\in P(G,HA)$ by checking Conditions 1, 2, 3, and 4.

Condition 1: For $\{a,b\}\in E$,
$$\sum_{d\in \{1,\ldots,r\}}x_{\{a,b\},d}=\sum_{p\in PM(V) \atop{p(\{a,b\})=1}}k_p=\sum_{p\in PM(V)}k_pp(\{a,b\})=v(\{a,b\})=1$$
since $v(c)=|\{d\in \{1,\ldots,r\}:c=\{H(d),A(d)\}\}|$.

Condition 2: Fix one vertex $a\in V$. For any perfect matching, vertex $a\in V$ is incident with exactly one edge in the matching. Therefore, for $d\in \{1,\ldots,r\}$, by letting $c =\{H(d),A(d)\}$,
\begin{eqnarray}
\sum_{b\in V:b\neq a} x_{\{a,b\},d}&=&\sum_{b\in V:b\neq a}\sum_{{p\in PM(V),p(\{a,b\})=1}\atop{p(c)=1}}k_p/v(c)\\
&=&\sum_{p\in PM(V) \atop {p(c)=1}}k_p/v(c)=\sum_{p\in PM(V)} k_p p(c)/v(c)=v(c)/v(c)=1.
\end{eqnarray}

Condition 3: By definition, we have $x_{\{a,b\},d}\geq 0$. Since $\sum_{d\in \{1,\ldots,r\}}x_{\{a,b\},d}=1$ for $\{a,b\}\in E$, we have $x_{\{a,b\},d}\leq 1$ for $\{a,b\}\in E$ and $d\in \{1,\ldots r\}$.

Condition 4: When $(a,b)\in H(d)\times H(d)$, there does not exist $p\in PM(V)$ such that $p(\{a,b\})=1$ and $p(\{H(d),A(d)\})=1$ because of the definition of $PM(V)$.
\end{proof}

\begin{lem}\label{lem:int}
For any $r$-regular graph $G=(V,E)$ and any HAP table, $P(G,HA)$ has an integral point if and only if there exists $v\in {\mathbf N}(PM(V))$
 such that $v|_K=\chi_E$ and $v(c)=|\{d\in \{1,\ldots,r\}:c=\{H(d),A(d)\}\}|$ for any $c\in C$.
\end{lem}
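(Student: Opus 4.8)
The plan is to mirror the structure of the proof of Lemma~\ref{lem:nonint}, keeping track of integrality throughout. Recall that the only place where non-integrality could enter in that proof is in the passage between the fractional solution $x$ and the multipliers $s_{q,d}$, and then between the $s_{q,d}$ and the coefficients $k_p$; conversely, when reconstructing $x$ from $v=\sum_p k_p p$ we divided by $v(\{H(d),A(d)\})$. So the whole content is to show that integrality on one side forces integrality on the other side, when arranged correctly.

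For the forward direction, suppose $x\in P(G,HA)$ is integral. Then each $x_{\{a,b\},d}\in\{0,1\}$ by Condition~3, and by Conditions~2 and~4 the restriction $x_{\cdot,d}$ is the characteristic vector of a perfect matching $q_d$ of the complete bipartite graph $B_{\{H(d),A(d)\}}$; that is, we may take the convex representation from Lemma~\ref{lem:nonint} to be the trivial one with $s_{q_d,d}=1$ and all other $s_{q,d}=0$. Feeding this into the definition $k_{(q,c)}=\sum_{d:\{H(d),A(d)\}=c}s_{q,d}$ yields nonnegative \emph{integers} $k_p$, and the vector $v=\sum_{p\in PM(V)}k_p p$ then lies in ${\mathbf N}(PM(V))$ by definition of ${\mathbf N}(\cdot)$. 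The verification that $v|_K=\chi_E$ and $v(c)=|\{d:c=\{H(d),A(d)\}\}|$ is then word-for-word the computation already carried out in Lemma~\ref{lem:nonint}; no changes are needed there.

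For the converse, suppose $v\in {\mathbf N}(PM(V))$ with $v|_K=\chi_E$ and $v(c)=|\{d:c=\{H(d),A(d)\}\}|$. Write $v=\sum_{p\in PM(V)}k_p p$ with $k_p\in{\mathbf N}$. For each equal partition $c$ appearing in the HAP table, the value $v(c)$ equals the number $m_c$ of days $d$ with $\{H(d),A(d)\}=c$, and it also equals $\sum_{q:(q,c)\in PM(V)}k_{(q,c)}$. Hence for each such $c$ we can distribute the $m_c$ days among the perfect matchings $q$ of $B_c$ so that matching $q$ is used on exactly $k_{(q,c)}$ of those days; formally, choose for each $c$ a surjection-with-prescribed-fibre-sizes from $\{d:\{H(d),A(d)\}=c\}$ onto the multiset in which each $q$ occurs $k_{(q,c)}$ times, and let $q_d$ denote the matching assigned to day $d$. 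Now set $x_{\{a,b\},d}=q_d(\{a,b\})\in\{0,1\}$. Conditions~3 and~4 are immediate (the latter because $(q_d,\{H(d),A(d)\})\in PM(V)$), Condition~2 holds because $q_d$ is a perfect matching, and Condition~1 follows since for $\{a,b\}\in E$ we get $\sum_d x_{\{a,b\},d}=\sum_d q_d(\{a,b\})=\sum_{p\in PM(V)}k_p p(\{a,b\})=v(\{a,b\})=1$, while for $\{a,b\}\notin E$ the same computation gives $0$. Thus $x\in P(G,HA)$ is an integral point.

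The only slightly delicate point — and the one I would write out carefully — is the combinatorial bookkeeping in the converse: matching up the abstract integer coefficients $k_{(q,c)}$, which come from an arbitrary decomposition of $v$, with the concrete list of days having HA-pattern $c$. This is exactly where the hypothesis $v(c)=m_c$ is used, and it is what makes the reconstructed $x$ integral rather than a convex combination; everything else is a transcription of Lemma~\ref{lem:nonint}. I do not anticipate any genuine obstacle, since both inclusions are constructive.
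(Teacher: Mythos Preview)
Your proposal is correct and follows essentially the same route as the paper's own proof: in the forward direction you specialize the convex representation of Lemma~\ref{lem:nonint} to the trivial one with $s_{q_d,d}=1$, and in the converse you match days with HA-pattern $c$ to perfect matchings according to the integer multiplicities $k_{(q,c)}$. The only cosmetic difference is that the paper first observes that $v|_K=\chi_E$ forces every $k_p\in\{0,1\}$, so the day-to-matching assignment is an injection rather than a multiset distribution; your phrasing with multiplicities is equally valid since the Condition~1 computation goes through regardless.
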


\begin{proof}
Suppose that $x\in P(G,HA)$ is integral. For $d\in \{1,\ldots,r\}$, $\{\{a,b\}\in K|x_{\{a,b\},d}=1\}$ is a perfect matching of $G$. For such a perfect matching $q$, let $s_{q,d}=1$. For other perfect matching $q$, let $s_{q,d}=0$. Note that $(q,\{H(d),A(d)\})\in PM(V)$ when $s_{q,d}=1$. Then we take
$k_{(q,c)}=\sum_{d:\{H(d),A(d)\}=c}s_{q,d}$ and $v=\sum_{p\in PM(V)}k_pp$. Then $v\in {\mathbf N}(PM(V))$ such that $v|_K=\chi_E$ and $v(c)=|\{d\in \{1,\ldots,r\}:c=\{H(d),A(d)\}\}|$ for any $c\in C$ by Conditions 1 to 4.

Conversely, assume $v\in {\mathbf N}(PM(V))$ such that $v|_K=\chi_E$ and $v = \sum_{p\in PM(V)} k_p p$ where $k_p$ is integral. Because of the assumption $v|_K=\chi_E$, $k_p$ is 0 or 1 for any $p\in PM(V)$. 
For $c\in C$, the number of $p\in PM(V)$ such that $p(c)=1$ and $k_p=1$ is $v(c)$ because of $$v(c)=(\sum_{p\in PM(V)}k_p p)(c)=\sum_{p\in PM(V)\atop p(c)=1}k_p.$$ On the other hand, for a fixed $c\in C$, the number of $d$ with $c=\{H(d),A(d)\}$ is $v(c)$ by the assumption. So we can associate $d\in \{1,\ldots,r\}$ with $p_d\in PM(V)$ injectively so that $k_{{p_d}}=1$ and $p_d(\{H(d),A(d)\})=1$. Then we define $x_{\{a,b\},d}=p_d(\{a,b\})$, whose components are integral. Since it is easy to check Conditions 1 to 4 to $x$, we have $x \in P(G,HA)$.
\end{proof}

By Lemmas \ref{lem:nonint} and \ref{lem:int}, we have the following corollary.
\begin{cor}
An $r$-regular graph is B-factorizable if and only if, for any HAP table,   $P(G,HA)$ contains an integral point whenever $P(G,HA)$ is non-empty.
\end{cor}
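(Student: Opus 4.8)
The plan is to obtain the corollary by chaining together Lemma~\ref{lem:nonint}, Lemma~\ref{lem:int}, and the definition of B-factorizability; the only real content is a short construction in the ``only if'' direction that converts a witness vector back into a HAP table.

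First I would treat the forward implication. Assume the $r$-regular graph $G=(V,E)$ is B-factorizable and fix any HAP table $(H,A)$ on $r$ days with $P(G,HA)\neq\emptyset$. By Lemma~\ref{lem:nonint} there is $v\in\overline{\mathbf N}(PM(V))$ with $v|_K=\chi_E$ and $v(c)=|\{d\in\{1,\ldots,r\}:c=\{H(d),A(d)\}\}|$ for every $c\in C$. B-factorizability then forces $v\in{\mathbf N}(PM(V))$, and Lemma~\ref{lem:int} applied to this same $v$ produces an integral point of $P(G,HA)$.

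For the converse I would argue directly: assume that for every HAP table $P(G,HA)$ has an integral point whenever it is non-empty, and take an arbitrary $v\in\overline{\mathbf N}(PM(V))$ with $v|_K=\chi_E$; the goal is $v\in{\mathbf N}(PM(V))$. Since $v\in{\mathbf N}^{K\cup C}$, each $v(c)$ is a nonnegative integer, so one can build a HAP table $(H,A)$ whose days use each equal partition $c\in C$ exactly $v(c)$ times (list the partitions with these multiplicities in any order). I must check that the total number of days equals $r$: this follows from the balance identity $\sum_{c\in C}v(c)=\frac{2}{|V|}\sum_{e\in K}v(e)$, valid on $\overline{\mathbf N}(PM(V))$ exactly as in the Proposition, together with $\sum_{e\in K}v(e)=\sum_{e\in K}\chi_E(e)=|E|=r|V|/2$. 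For this HAP table, $v$ itself satisfies the hypotheses of Lemma~\ref{lem:nonint}, so $P(G,HA)\neq\emptyset$; by assumption it then contains an integral point, and Lemma~\ref{lem:int} returns $v'\in{\mathbf N}(PM(V))$ with $v'|_K=\chi_E$ and $v'(c)=v(c)$ for all $c\in C$. Since a vector of ${\mathbf R}^{K\cup C}$ is determined by its edge components and its HA components, $v'=v$, hence $v\in{\mathbf N}(PM(V))$ and $G$ is B-factorizable.

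I do not expect a genuine obstacle here: the two lemmas already do the heavy lifting, and the only point requiring care in the converse is the bookkeeping — confirming that the constructed HAP table has exactly $r$ days, and that the integral vector handed back by Lemma~\ref{lem:int} is literally $v$ and not merely a vector agreeing with it on part of its support. As a byproduct, specializing to $G=K_{|V|}$ identifies Conjecture~\ref{con:dirk} with Conjecture~\ref{con:briskorn}, which is the equivalence promised earlier.
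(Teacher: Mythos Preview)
Your proof is correct and is exactly the argument the paper has in mind: the paper's own proof is the single sentence ``By Lemmas~\ref{lem:nonint} and~\ref{lem:int}, we have the following corollary,'' and your write-up simply spells out the two directions that this sentence leaves implicit, including the construction of a HAP table from the HA components of $v$ in the converse.
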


By applying this corollary to the complete graph, we have the following corollary.

\begin{cor}\label{cor:main}
Conjecture \ref{con:dirk} and Conjecture \ref{con:briskorn} are equivalent.
\end{cor}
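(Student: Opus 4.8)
The plan is to read off Corollary~\ref{cor:main} from the corollary just established, namely that an $r$-regular graph $G=(V,E)$ is $B$-factorizable if and only if, for every HAP table, the polytope $P(G,HA)$ contains an integral point whenever it is non-empty. Since $K_{|V|}$ is $(|V|-1)$-regular, it is a legitimate instance of that corollary with $r=|V|-1$. Specializing the two sides of the equivalence to $G=K_{|V|}$: the left-hand side, ``$K_{|V|}$ is $B$-factorizable'', is by definition the content of Conjecture~\ref{con:dirk}, while the right-hand side, ``for every HAP table, $P(K_{|V|},HA)$ contains an integral point whenever it is non-empty'', is verbatim Conjecture~\ref{con:briskorn}. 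Hence the two conjectures are equivalent.

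If one wishes to avoid quoting the corollary and argue directly, the two implications are obtained from Lemmas~\ref{lem:nonint} and~\ref{lem:int}. For Conjecture~\ref{con:dirk} $\Rightarrow$ Conjecture~\ref{con:briskorn}: given a HAP table with $P(K_{|V|},HA)\neq\emptyset$, Lemma~\ref{lem:nonint} yields $v\in\overline{\mathbf N}(PM(V))$ with $v|_K=\chi_{K_{|V|}}$ and $v(c)=|\{d:c=\{H(d),A(d)\}\}|$; $B$-factorizability of $K_{|V|}$ forces $v\in{\mathbf N}(PM(V))$; and Lemma~\ref{lem:int} then produces an integral point of $P(K_{|V|},HA)$. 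For the converse, given $v\in\overline{\mathbf N}(PM(V))$ with $v|_K=\chi_{K_{|V|}}$, the Proposition gives $\sum_{c\in C}v(c)=|V|-1$, so the multiset containing $v(c)$ copies of each $c\in C$ has exactly $|V|-1$ members and can be listed as a HAP table $(H,A):\{1,\ldots,|V|-1\}\to C$ with $v(c)=|\{d:c=\{H(d),A(d)\}\}|$; then Lemma~\ref{lem:nonint} shows $P(K_{|V|},HA)\neq\emptyset$, Conjecture~\ref{con:briskorn} supplies an integral point, and Lemma~\ref{lem:int} returns a vector of ${\mathbf N}(PM(V))$ agreeing with $v$ on both the edge and the HA components, hence equal to $v$; thus $v\in{\mathbf N}(PM(V))$.

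There is essentially no obstacle here: all of the substance lives in Lemmas~\ref{lem:nonint} and~\ref{lem:int} (and hence in the preceding corollary). The one point worth checking is that the HAP tables quantified over in the definition of $B$-factorizability and in Conjecture~\ref{con:briskorn} form the same family --- every map $\{1,\ldots,|V|-1\}\to C$ is an admissible HAP table for $K_{|V|}$, and conversely every $v\in\overline{\mathbf N}(PM(V))$ with $v|_K=\chi_{K_{|V|}}$ arises from such a map by the count $\sum_{c\in C}v(c)=|V|-1$ furnished by the Proposition.
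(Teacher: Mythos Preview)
Your proposal is correct and follows exactly the paper's approach: the paper proves Corollary~\ref{cor:main} in a single line by specializing the preceding corollary to the complete graph, which is precisely your first paragraph. Your additional two paragraphs merely unpack that corollary via Lemmas~\ref{lem:nonint} and~\ref{lem:int} and verify the bookkeeping on HAP tables, which is fine but not needed.
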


\section{Hilbert basis and some classes of regular graphs}

\subsection{Hilbert basis and additional generators}

A monoid in the affine space that we consider in this paper is a set included in ${\mathbf Z}^{K\cup C}$ that is closed under addition with $0$. So $\overline{\mathbf N}(PM(V))$ and ${\mathbf N}(PM(V))$ are monoids in the affine space.

The Hilbert basis of a monoid in the affine space is introduced as follows. 
For a monoid in the affine space, a generating set is defined to be a set of integral vectors such that the nonnegative integral combinations of them are equal to the integral points which are contained in the convex-hull of the monoid. A monoid is said to be pointed when $x$ and $-x$ in the points of the monoid imply $x=0$. It is known that, for a pointed monoid,  there exists a unique minimal generating set with respect to inclusion. It is called the Hilbert basis.

A vector in the Hilbert basis of $\overline{\mathbf N}(PM(V))$ may not belong to $PM(V)$. In other words, $\overline{\mathbf N}(PM(V))$ may not be equal to ${\mathbf N}(PM(V))$ generally.

\begin{df}
We call a vector which belongs to the Hilbert basis of $\overline{\mathbf N}(PM(V))$ and does not belong to $PM(V)$ an {\it additional generator}.
\end{df}

Note that any additional generator does not belong to ${\mathbf N}(PM(V))$ because any vector in ${\mathbf N}(PM(V))$ can be divided into vectors in $PM(V)$.

The next lemma follows from the definitions of additional generators and B-factorizability. 

\begin{lem}\label{lem:addb}
When $v\in\overline{\mathbf N}(PM(V))\cap \mbox{problem}(V)$ is an additional generator, graph $(V,E(v))$ is not B-factorizable.
\end{lem}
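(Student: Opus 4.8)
The plan is to unfold the definitions of \emph{additional generator} and of \emph{B-factorizability} and check that they clash on $v$. Write $G = (V, E(v))$, which is a regular graph because $v \in \mbox{problem}(V)$ forces $E(v)$ to be regular (and $v|_K \le \chi_K$, so $v|_K = \chi_{E(v)}$ with $0$--$1$ entries). By the definition of additional generator, $v$ lies in the Hilbert basis of $\overline{\mathbf N}(PM(V))$ but $v \notin PM(V)$. In particular $v \in \overline{\mathbf N}(PM(V))$ and $v|_K = \chi_{E(v)}$.

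The key step is to observe, as already noted in the remark immediately following the definition of additional generator, that $v \notin {\mathbf N}(PM(V))$: any element of ${\mathbf N}(PM(V))$ is a sum of elements of $PM(V)$, so if $v$ were such a sum it would be a nontrivial sum (since $v \notin PM(V)$ itself), contradicting minimality of the Hilbert basis — a Hilbert basis element can never be written as a sum of two or more nonzero elements of the monoid, and every element of $PM(V)$ is a nonzero element of $\overline{\mathbf N}(PM(V))$. Hence we have produced $v$ with $v \in \overline{\mathbf N}(PM(V))$, $v|_K = \chi_{E(v)}$, but $v \notin {\mathbf N}(PM(V))$.

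Finally, apply the definition of B-factorizability to $G = (V, E(v))$: $G$ is B-factorizable precisely when every $v' \in \overline{\mathbf N}(PM(V))$ with $v'|_K = \chi_{E(v)}$ satisfies $v' \in {\mathbf N}(PM(V))$. Our $v$ is a witness that this fails, so $G$ is not B-factorizable, as claimed.

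There is essentially no obstacle here; the statement is a direct bookkeeping consequence of the two definitions plus the already-recorded observation that additional generators lie outside ${\mathbf N}(PM(V))$. The only point that deserves a sentence of care is why membership in $\mbox{problem}(V)$ is needed at all — it guarantees that $E(v)$ is genuinely a regular graph (so that "B-factorizable" even applies to it) and that $v|_K$ is a $0$--$1$ vector equal to $\chi_{E(v)}$ rather than merely bounded by $\chi_K$; without this the conclusion would not be well-posed.
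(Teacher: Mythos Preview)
Your proof is correct and is exactly the unfolding of definitions that the paper has in mind; the paper itself gives no explicit argument, merely stating that the lemma ``follows from the definitions of additional generators and B-factorizability.'' Your write-up simply makes this explicit (including the already-recorded remark that additional generators lie outside ${\mathbf N}(PM(V))$), so there is nothing to add or correct.
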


An example showing that the converse statement does not hold will appear in Example \ref{ex:petersen1}. But we have the following lemma.

\begin{lem}\label{lem:countha}
Consider $v\in \overline{\mathbf N}(PM(V))\cap \mbox{problem}(V)$ such that  $E(v)$ is not a B-factorizable. Then there exists an additional generator $v'\in \overline{\mathbf N}(PM(V))\cap \mbox{problem}(V)$ such that $v' \leq v$.  Moreover, $E(v')$ is not B-factorizable.
\end{lem}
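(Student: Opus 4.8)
The goal is, given $v\in\overline{\mathbf N}(PM(V))\cap\mbox{problem}(V)$ whose edge graph $E(v)$ is not B-factorizable, to produce an additional generator $v'\le v$ (componentwise) lying in $\overline{\mathbf N}(PM(V))\cap\mbox{problem}(V)$ with $E(v')$ again not B-factorizable. The natural approach is to exploit the Hilbert basis of $\overline{\mathbf N}(PM(V))$: since $\overline{\mathbf N}(PM(V))$ is a pointed affine monoid (it lives in ${\mathbf N}^{K\cup C}$, so it contains no line), it has a Hilbert basis $\mathcal H=PM(V)\cup A$, where $A$ is the set of additional generators. Any $v$ in the monoid can be written $v=\sum_{h\in\mathcal H} k_h\,h$ with $k_h\in{\mathbf N}$. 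If \emph{every} term $h$ with $k_h>0$ belonged to $PM(V)$, then $v\in{\mathbf N}(PM(V))$; but since $v|_K=\chi_{E(v)}$ and $E(v)$ is not B-factorizable, $v\notin{\mathbf N}(PM(V))$ (by the definition of B-factorizability, B-factorizability would force $v\in{\mathbf N}(PM(V))$). Hence at least one additional generator $v'\in A$ appears with positive coefficient in the expansion of $v$, and in particular $v'\le v$.

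It remains to check that this $v'$ has the required properties. First, $v'\in\overline{\mathbf N}(PM(V))$ by construction (it is in the Hilbert basis). Second, since $v'\le v$ componentwise and $v|_K=\chi_{E(v)}\le\chi_K$, we get $v'|_K\le\chi_K$; then Proposition (the one labelled after the definition of $\mbox{problem}(V)$, stating that $v\in\overline{\mathbf N}(PM(V))$ with $v|_K\le\chi_K$ implies $v\in\mbox{problem}(V)$) gives $v'\in\mbox{problem}(V)$. So $v'\in\overline{\mathbf N}(PM(V))\cap\mbox{problem}(V)$ is an additional generator, and Lemma \ref{lem:addb} immediately yields that $(V,E(v'))$ is not B-factorizable. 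That is the whole chain.

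The one point requiring care is the very first claim in the displayed reasoning: that $v\notin{\mathbf N}(PM(V))$, equivalently that not all Hilbert-basis terms in the expansion of $v$ lie in $PM(V)$. One must argue this from the hypothesis "$E(v)$ is not B-factorizable" without circularity. The definition says $(V,E)$ is B-factorizable iff every $w\in\overline{\mathbf N}(PM(V))$ with $w|_K=\chi_E$ satisfies $w\in{\mathbf N}(PM(V))$. So "$E(v)$ not B-factorizable" means there \emph{exists} some $w\in\overline{\mathbf N}(PM(V))$ with $w|_K=\chi_{E(v)}$ and $w\notin{\mathbf N}(PM(V))$ — but a priori this witness $w$ need not be $v$ itself. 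Here the balance condition saves us: for any $w\in\overline{\mathbf N}(PM(V))$ with $w|_K=\chi_{E(v)}$, the HA-total $\sum_{c}w(c)=\frac{2}{|V|}\sum_{e}w(e)$ is determined by $E(v)$ alone, and more is true — since $E(v)$ is $r$-regular, every such $w$ has the \emph{same} total number $r$ of perfect-matching pieces and (by the matching-polytope argument underlying Lemma \ref{lem:nonint}) the same constraints; one shows that $v\in{\mathbf N}(PM(V))$ would force \emph{every} $w$ with $w|_K=\chi_{E(v)}$ to be in ${\mathbf N}(PM(V))$. The cleanest way to see this: if $v\in{\mathbf N}(PM(V))$ then $E(v)$ admits an $r$-edge-colouring compatible with some HAP table, and then by Lemmas \ref{lem:int} and \ref{lem:nonint} applied across all HAP tables, $P(G,HA)$ nonempty implies $P(G,HA)$ has an integral point for the graph $G=(V,E(v))$ — i.e. $E(v)$ is B-factorizable, contradiction. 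I expect this circularity-avoidance step — pinning down that $v\notin{\mathbf N}(PM(V))$ genuinely follows from non-B-factorizability of $E(v)$ — to be the main (though modest) obstacle; once it is in place, the Hilbert-basis expansion and Lemma \ref{lem:addb} finish the argument routinely.
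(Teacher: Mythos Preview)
You correctly isolate the one delicate point: from the hypothesis that $E(v)$ is not B-factorizable one cannot conclude that the given $v$ itself lies outside ${\mathbf N}(PM(V))$, only that \emph{some} $v''$ with $v''|_K=\chi_{E(v)}$ does. Your attempted resolution of this point is where the argument breaks. The implication you assert --- that $v\in{\mathbf N}(PM(V))$ would force $E(v)$ to be B-factorizable --- is false: membership of $v$ in ${\mathbf N}(PM(V))$ yields an $r$-edge-colouring compatible only with the \emph{particular} HAP table encoded by $v$, and says nothing about other HAP tables on the same edge set. Lemmas \ref{lem:nonint} and \ref{lem:int} do not bridge this, since each concerns one fixed HAP table at a time. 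Concretely, the octahedron is not B-factorizable (Example \ref{ex:sixcrown}), yet there are HAP tables for which it admits a compatible $4$-edge-colouring; for such a $v$ one has $v\in{\mathbf N}(PM(V))$, and since every additional generator on six vertices already has a full octahedron as edge support and HA-total equal to $4$, the conditions $v'\le v$ and $v'|_K\le v|_K=\chi_E$ force $v'=v$, contradicting $v'\notin{\mathbf N}(PM(V))$. So no additional generator $v'\le v$ exists in that case, and the step you flagged cannot be repaired along the lines you sketch.

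The paper's own proof does not try to show $v\notin{\mathbf N}(PM(V))$. It instead passes to the witness $v''$ guaranteed by non-B-factorizability (so $E(v'')=E(v)$ and $v''\notin{\mathbf N}(PM(V))$), expands $v''$ in the Hilbert basis, and extracts an additional generator $v'$ from that expansion; this gives $v'\le v''$. The final clause then follows from Lemma \ref{lem:addb} exactly as you outline. Thus the paper's argument literally yields $v'\le v''$ rather than $v'\le v$; this weaker conclusion is what is actually used in every application of the lemma (in particular in Theorem \ref{thm:nocount6}, where the ``counterexample'' $v$ already satisfies $v\notin{\mathbf N}(PM(V))$, so one may take $v''=v$). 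Your overall Hilbert-basis strategy matches the paper's; the correct way through the obstacle is to pass to the witness $v''$, not to argue about $v$ directly.
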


\begin{proof}
Consider $v\in \overline{\mathbf N}(PM(V))\cap \mbox{problem}(C)$ such that  $E(v)$ is not a B-factorizable.
Then there exists $v''\in\overline{\mathbf N}(PM(V))\cap \mbox{problem}(V)$ such that $E(v'')=E(v)$ and $v''\notin {\mathbf N}(PM(V))$. Therefore when $v''$ is expressed by a nonnegative integral combination of the Hilbert basis, $v$ cannot be expressed as a nonnegative integer combination of $PM(V)$ but some additional generator $v'$ appears in the support of the integral combinations of the Hilbert basis. Therefore $v''$ is expressed as the addition of $v'$ and some vector. So we have $v' \leq v''$. The latter statement in the lemma follows from Lemma \ref{lem:addb}.
\end{proof}

So if there exists a counterexample $v\in\overline{\mathbf N}(PM(V))$ to Conjecture \ref{con:dirk}, there exists an additional generator $v'$ of $\overline{\mathbf N}(PM(V))$ such that $E(v')$ is not a B-factorizable. This fact will be used in the proof of Theorem \ref{thm:nocount6}.

Normaliz\cite{normaliz} is a computer program which calculates the Hilbert basis from generators of a monoid. We can calculate additional generators of $\overline{\mathbf N}(PM(V))$ in terms of normaliz when $V$ is relatively small. We use normaliz to check whether a given regular graph is B-factorizable or not.

\subsection{How to represent a vector in figures}

We use a figure to show a problem $v\in \mbox{problem}(V)$ and an evidence indicating that it belongs to $\overline{\mathbf N}(PM(V))$ or ${\mathbf N}(PM(V))$.
In this subsection, we state how to represent vector $v$ in figures.

\begin{figure}[ht]
\begin{center}
\includegraphics[width=10cm]{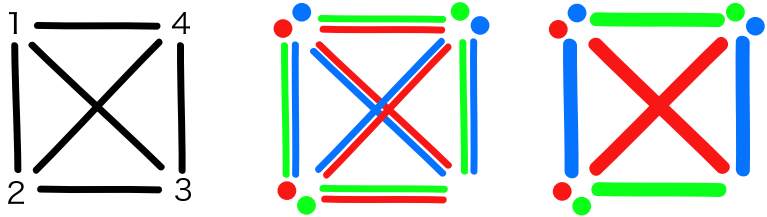}
\caption{Complete graph with 4 vertices}\label{fig:exa4}
\end{center}
\end{figure}

First we explain how to represent $v\in \mbox{problem}(V)$ by a figure.
We consider four teams and three days as an example. Then, since each pair of four teams should play against each other, the graph to be considered is the complete graph on 4 vertices. The edge components of $v$ are determined by the edges of the graph so that $v|_K=\chi_E$. The HA components of $v$ are determined by colored small circles in Figure \ref{fig:exa4}(middle). We represent the dates by colors. Let the first day correspond to red, the second day to blue and the third day to green. To represent which team is a home team at that day, we use small circles painted in the color of the day. A small circle near a vertex indicates that the team corresponding to the vertex is a home team in the day corresponding to the color of the small circle. Note that we do not draw  the vertices explicitly in the figure for simplicity.

Next, we explain how to represent an evidence indicating that it belongs to $\overline{\mathbf N}(PM(V))$ or ${\mathbf N}(PM(V))$. We use colored edges for that purpose. In Figure \ref{fig:exa4}(middle), the graph is double covered with edges. Note that, for each edge, if one end of the edge is at a home game, the other end is at an away game. There exist red edges $\{1,3\},\{1,4\},\{2,3\},\{2,4\}$. 
These edges represent two perfect matchings $\{1,4\},\{2,3\}$ and $\{1,3\},\{2,4\}$. Similarly, for each date, there exist two perfect matchings. Each perfect matching corresponds to $k_ip_i$ in the definition of ${\mathbf N}(PM(V))$. Then we have $2v=\sum_{p\in PM(V)} k_ip_i$ and $v|_K=\chi_E$. So, we see that $v\in \overline{\mathbf N}(PM(V))$. We say that such a graph has a double-edge covering consisting of perfect matchings compatible with the HAP table. 

Moreover, $v$ belongs to ${\mathbf N}(PM(V))$ since this graph has a 3-edge-coloring compatible with the HAP table as in Figure \ref{fig:exa4}(right).

\subsection{B-factorizability for regular graphs on 6 vertices}

Theoretically, the additional generators can be calculated by normaliz. But for a graph whose vertex set is rather large, the calculation needs enormous time. To begin with, we calculated additional generators for $|V|=6$. That is, in this subsection, we consider a scheduling problem with 6 teams.

\begin{ex}\label{ex:sixcrown}

The number of all the equal partitions $C$ on the set of size 6 is 10. The number of edges of the complete graph whose vertex set has size 6 is 15. So $R^{K\cup C}$ is a 25-dimensional linear space. $K_{3,3}$ has 6 perfect matchings. As it turned out, $PM(V)$ consists of 60 vectors. The Hilbert basis of $\overline{\mathbf N}(PM(V))$ can be calculated by normaliz. By using normaliz, we know that the number of additional generators are 90, which are all isomorphic.

An additional generator $v\in \mbox{problem}(V)$ is shown in Figure \ref{fig:add6}. This 4-regular graph on 6 vertices is isomorphic to the graph of the octahedron. Recall that the HAP table for four days is represented by the four colors of the small circles near each vertex. A colored small cycle indicates the home team on the date corresponding to the color. 

The graph in the figure has a double-edge covering of 8 perfect matchings. There exist two perfect matchings, whose edges are painted in the same color, per day in this figure. For two perfect matchings on the same date, we draw them at once in the same color in the figure. The double-edge covering means that $2v \in {\mathbf N}(PM(V))$. We have $v\in \overline{\mathbf N}(PM(V))$.

There exist no 4-edge-coloring compatible with the HAP table because we cannot take one perfect matching in the two perfect matchings for each day so that these four perfect matchings are disjoint.
So we have $v\notin {\mathbf N}(PM(V))$. This means that the graph of the octahedron is not B-factorizable by Lemma \ref{lem:addb}.

\begin{figure}[ht]
\begin{center}
\includegraphics[width=4cm]{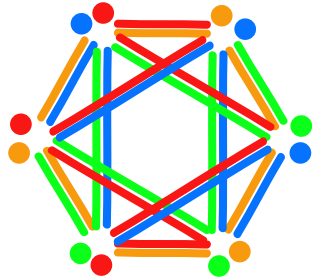}
\caption{Additional generator on 6 vertices}\label{fig:add6}
\end{center}
\end{figure}

\end{ex}

Note that calculating the Hilbert basis of the monoid $\overline{\mathbf N}(PM(V))$, which does not depend upon a graph,  naturally tells us that the graph of the octahedron is not B-factorizable.
Moreover, we have only to calculate the Hilbert basis once, that is, we do not have to calculate it for every HAP table although the direct test of Conjecture \ref{con:briskorn} needs calculations for all possible HAP tables.

\begin{thm}\label{thm:nocount6}
When $|V|=6$, there exists no counterexample to Conjecture \ref{con:dirk}.
\end{thm}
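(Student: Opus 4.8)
The plan is to reduce the statement to a finite computation about the Hilbert basis of $\overline{\mathbf N}(PM(V))$ for $|V|=6$, exactly the object computed by normaliz in Example \ref{ex:sixcrown}. First I would recall that, by Corollary \ref{cor:main}, Conjecture \ref{con:dirk} for $|V|=6$ is the assertion that $K_6$ is B-factorizable, i.e. that every $v\in\overline{\mathbf N}(PM(V))$ with $v|_K=\chi_{K_6}$ in fact lies in ${\mathbf N}(PM(V))$. So a counterexample would be some $v\in\overline{\mathbf N}(PM(V))\cap\mbox{problem}(V)$ with $E(v)=K_6$ and $v\notin{\mathbf N}(PM(V))$; in other words, $K_6$ would fail to be B-factorizable.

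Next I would invoke Lemma \ref{lem:countha}: if such a $v$ existed, then there would be an additional generator $v'\in\overline{\mathbf N}(PM(V))\cap\mbox{problem}(V)$ with $v'\le v$ and $E(v')$ not B-factorizable. Since $v|_K=\chi_{K_6}$, the edge-component inequality $v'\le v$ forces $v'|_K\le\chi_{K_6}=\chi_K$, which is automatic and gives no restriction; the real content is simply that some additional generator exists. Thus it suffices to examine every additional generator of $\overline{\mathbf N}(PM(V))$ for $|V|=6$ and check that none of them can be "completed" to a counterexample on $K_6$. Here is where the normaliz computation enters: by Example \ref{ex:sixcrown} there are exactly $90$ additional generators, all isomorphic, and each has $E(v')$ isomorphic to the octahedron graph, which is $4$-regular. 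In particular no additional generator has $E(v')=K_6$ (the octahedron is not complete), and $\chi_{E(v')}\le\chi_{K_6}$ only in the trivial edge-indicator sense while the HA components of $v'$ are pinned to a specific HAP pattern on the octahedron.

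The key remaining step is to argue that a counterexample $v$ with $E(v)=K_6$ cannot have any additional generator below it. I would argue as follows. Suppose $v$ is such a counterexample and write $v=v'+w$ with $v'$ an additional generator and $w\in\overline{\mathbf N}(PM(V))$; then $v'|_K=\chi_{E(v')}$ is the octahedron edge-set (say $E'\subsetneq K_6$) and $w|_K=\chi_{K_6}-\chi_{E'}=\chi_{K_6\setminus E'}$. But $K_6\setminus E'$ is the complement of the octahedron in $K_6$, which is the perfect matching $3K_2$ — a $1$-regular graph. For $w\in\overline{\mathbf N}(PM(V))$ with $w|_K=\chi_{3K_2}$, Proposition (the one following the two-monoids subsection) together with the balancing condition forces $\sum_{c}w(c)=1$, so $w$ must be a single element of $PM(V)$ up to the affine structure; concretely $w$ is forced to be $(q,c)$ where $q=3K_2$ is the matching and $c=\{H,A\}$ is some equal partition making $q$ a perfect matching of $B_c$. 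Then $v=v'+(q,c)$, and one checks from the explicit octahedron additional generator — which has a double-edge covering by $8$ perfect matchings arranged in $4$ monochromatic pairs — that adding the single perfect matching $q$ on a fifth "day" does yield a $5$-edge-coloring: the obstruction in Example \ref{ex:sixcrown} was precisely the inability to choose one matching from each of the $4$ pairs disjointly, but now I get a genuine factorization of $v'$ together with $(q,c)$ into $5$ perfect matchings compatible with the HAP table, so $v\in{\mathbf N}(PM(V))$, a contradiction. (More carefully: since $v'+w$ and $v'$ have the same multiset of perfect matchings in their double-cover up to the extra copies of $q$, and since $v=v'+(q,c)$ has $v|_K=\chi_{K_6}$ which is $2$-connected and of even degree $5$... one must instead verify directly that $2v=2v'+2(q,c)$ decomposes and that the parity/degree count on $K_6$ permits an honest $5$-coloring; this is the finite check.)

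The main obstacle is precisely this last completion argument: knowing that every additional generator is the octahedron one does not by itself immediately give that $K_6$ is B-factorizable, because one must rule out that a non-integral $v$ on $K_6$ decomposes with an additional generator in a way that still cannot be repaired. The cleanest route, and the one I expect the authors take, is the direct computational one: feed normaliz not just the Hilbert basis but the specific question "is there $v\in\overline{\mathbf N}(PM(V))$ with $v|_K=\chi_{K_6}$ and $v\notin{\mathbf N}(PM(V))$?" — equivalently, enumerate the finitely many lattice points of the rational polytope $\{v\in\overline{\mathbf N}(PM(V)):v|_K=\chi_{K_6}\}$ (its vertices are determined by the finitely many HAP tables on $K_6$ compatible with a fractional schedule) and verify for each that it lies in ${\mathbf N}(PM(V))$ by exhibiting a proper $5$-edge-coloring. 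Since $|V|=6$ this polytope and the set of relevant HAP tables are small enough for exhaustive verification, so the theorem follows. I would therefore present the proof as: (1) translate via Corollary \ref{cor:main} and Lemma \ref{lem:countha}; (2) cite the normaliz output that all additional generators are octahedral; (3) conclude that no additional generator sits below a $v$ with $E(v)=K_6$ except through a residual perfect matching, and handle that single residual case explicitly as above.
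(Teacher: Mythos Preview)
Your approach is the paper's: reduce via Lemma~\ref{lem:countha} to the unique (up to isomorphism) octahedral additional generator $v'$, note that any putative counterexample $v$ with $E(v)=K_6$ has $v=v'+w$ where $w|_K$ is the characteristic vector of the complementary perfect matching and $w$ has a single HA component, and then finish with a finite check that every such $v$ lies in ${\mathbf N}(PM(V))$. The paper does exactly this, phrased as: the HAP table of $v$ is the four days of the octahedral generator plus one more day, and for each of the $10$ possible equal partitions on that fifth day one exhibits an explicit compatible $5$-edge-colouring of $K_6$.

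One point where your write-up wobbles is worth flagging. Having (correctly) pinned down $w=(q,c)$ with $q$ the complement of the octahedron, you assert that ``adding the single perfect matching $q$ on a fifth day does yield a $5$-edge-colouring''. Read literally this fails: if the fifth colour class were $(q,c)$ itself, the remaining four classes would have to decompose $v'$, which is impossible precisely because $v'\notin{\mathbf N}(PM(V))$. What makes the verification go through is that for each admissible $c$ one may take a \emph{different} perfect matching of $B_c$ as the fifth colour class; the residual $4$-regular graph is then a different copy of the octahedron inside $K_6$, and that copy \emph{is} $4$-edge-colourable compatibly with the four fixed HA components. You sense this in your parenthetical retreat to ``the finite check''; the paper simply performs that check over all $10$ equal partitions and is done. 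Your alternative suggestion of enumerating all lattice points of $\{v\in\overline{\mathbf N}(PM(V)):v|_K=\chi_{K_6}\}$ directly is not what the paper does and would be considerably larger than the ten-case check.
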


\begin{proof}
As for six teams, it suffices to consider only one additional generator because we know by calculation that all the additional generators are isomorphic. By Lemma \ref{lem:countha}, if there exists a counterexample to Conjecture \ref{con:dirk}, the HAP table of the counterexample contains four  HA components of the additional generator. That is, the entries of the HAP table for four days in the five days are determined by the additional generator. To complete a schedule for the five days, we have only to make entries of the HAP table for one more day. For each of the equal partitions, which are of 10 types, we have to check whether the vector obtained by adding it as one more day to the HAP table of the additional generator belongs to ${\mathbf N}(PM(V))$ or not. It is confirmed by calculation that they all belong to ${\mathbf N}(PM(V))$.
Figure \ref{fig:comp6} illustrates one case of the 10 cases as an example. Note that an equal partition is added to the HAP table on the fifth day as shown in the figure. In this case, there exists a 5-edge-coloring compatible with the HAP table.
\begin{figure}[ht]
\begin{center}
\includegraphics[width=4cm]{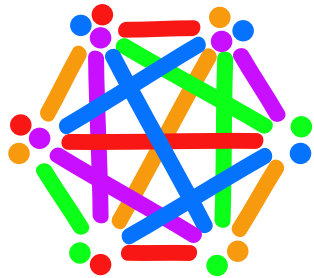}
\caption{5-edge-coloring compatible with HAP table}\label{fig:comp6}
\end{center}
\end{figure}
\end{proof}

\begin{prop}
$P(K_{|V|},HA)$ is a non-integral polytope for the HAP table in Figure \ref{fig:comp6}.
\end{prop}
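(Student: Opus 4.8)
The plan is to locate a non-empty face of $P(K_{6},HA)$ that contains no integral point. A non-empty face of a polytope has a vertex, and the vertices of a face are vertices of the whole polytope, so such a face exhibits a non-integral vertex of $P(K_{6},HA)$; hence $P(K_{6},HA)$ is not an integral polytope (consistently with Theorem \ref{thm:nocount6}, which only asserts that the polytope also contains integral points).

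First I would pin down the table in Figure \ref{fig:comp6}. Let $v$ be the additional generator of Example \ref{ex:sixcrown}, let $G_{0}=(V,E_{0})$ with $E_{0}=E(v)$ be its octahedron graph, and put $q_{0}=K\setminus E_{0}$, which is a perfect matching because $G_{0}$ is $4$-regular on $6$ vertices. The first four days of the table are the HA components of $v$; call this four-day table $HA_{4}$. Applying Lemma \ref{lem:nonint} to $v$ (which has $v|_{K}=\chi_{E_{0}}$ and HA components $HA_{4}$ by construction), $P(G_{0},HA_{4})$ is non-empty; applying Lemma \ref{lem:int} together with the fact recorded in Example \ref{ex:sixcrown} that $G_{0}$ admits no $4$-edge-coloring compatible with $HA_{4}$, $P(G_{0},HA_{4})$ contains no integral point. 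The fifth day of the table is a single equal partition $c_{0}$, which in the figure is chosen so that $q_{0}$ is a perfect matching of $B_{c_{0}}$, i.e.\ $(q_{0},c_{0})\in PM(V)$.

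Next I would exhibit the face $F=\{x\in P(K_{6},HA):x_{e,5}=1\ \mbox{for all}\ e\in q_{0}\}$, cut out by tightening the valid inequalities $x_{e,5}\le 1$. A saturation argument using Condition 2 on the fifth day (each vertex lies in exactly one edge of the perfect matching $q_{0}$) forces $x_{\cdot,5}=\chi_{q_{0}}$ on $F$; Condition 1 then gives $x_{e,d}=0$ for $e\in q_{0}$ and $d\le 4$, while for $e\in E_{0}$ it gives $x_{e,5}=0$ and $\sum_{d\le 4}x_{e,d}=1$. Checking Conditions 1--4 in both directions shows that $x\mapsto(x_{e,d})_{e\in E_{0},\,d\le 4}$ is an affine isomorphism from $F$ onto $P(G_{0},HA_{4})$ — here Condition 4 on day $5$ uses exactly the compatibility of $q_{0}$ with $c_{0}$ noted above. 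Therefore $F$ is non-empty and has no integral point, so any vertex of $F$ is a non-integral vertex of $P(K_{6},HA)$, proving the proposition. (An explicit, though not necessarily extreme, point of $F$ comes from the double-edge covering $2v=\sum_{d=1}^{4}(p_{d,1}+p_{d,2})$ of Example \ref{ex:sixcrown}: set $x_{\cdot,d}=\frac{1}{2}(\chi_{q_{d,1}}+\chi_{q_{d,2}})$ for $d\le 4$ and $x_{\cdot,5}=\chi_{q_{0}}$; since $v\notin{\mathbf N}(PM(V))$ not all $q_{d,1}=q_{d,2}$, so this point is non-integral.)

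The step needing care is the face identification: one must verify that pinning $x_{\cdot,5}=\chi_{q_{0}}$ genuinely collapses the constraint system of $P(K_{6},HA)$ onto that of $P(G_{0},HA_{4})$ — in particular that once day $5$ is fixed no edge of $q_{0}$ can be played on days $1$--$4$ and no edge of $E_{0}$ on day $5$ — and that the fifth-day partition $c_{0}$ of the figure really is compatible with $q_{0}$, so that $F$ is non-empty. Both fall out of Conditions 1--4 and the description of $v$, but they are where the work lies; granting them, the passage ``non-empty face without lattice points $\Rightarrow$ non-integral vertex'' is immediate.
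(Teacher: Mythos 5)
Your proof is correct, and it is a mild but genuine variant of the paper's argument. The paper's proof simply writes down one explicit point --- value $0.5$ on the octahedron edges, distributed according to the double-edge covering of Figure \ref{fig:add6}, on days $1$--$4$, and value $1$ on the three remaining edges on day $5$ --- and asserts that this point is itself a non-integral vertex because it ``cannot move in the polytope when the components on the fifth day are fixed.'' You instead treat the fixing of day $5$ as a face: tightening $x_{e,5}\le 1$ to equality on the complementary perfect matching $q_0$ cuts out a face $F$, which you identify affinely with the polytope $P(G_0,HA_4)$ of the octahedron under the four-day table, and then Lemmas \ref{lem:nonint} and \ref{lem:int} together with the facts recorded in Example \ref{ex:sixcrown} ($v\in\overline{\mathbf N}(PM(V))$, no compatible $4$-edge-coloring) show $F$ is non-empty but contains no integral point, so an arbitrary vertex of $F$ is a non-integral vertex of $P(K_6,HA)$. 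What your route buys is that you never need to verify extremality of a particular fractional point --- precisely the step the paper leaves as a bare assertion; what it costs is that the non-integral vertex is not exhibited explicitly, although your parenthetical half-integral point from the double covering recovers the paper's point as an element of $F$. Both arguments rest on the same figure-specific fact, which you correctly flag rather than gloss over: the fifth-day equal partition of Figure \ref{fig:comp6} must be compatible with $q_0$, i.e.\ each of the three non-octahedron edges joins a home team to an away team on day $5$; this is what makes the paper's point feasible and your face $F$ non-empty.
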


\begin{proof}
We construct a non-integral point in ${\mathbf R}^{K\cup \{1,\ldots,5\}}$ in terms of the proof of Theorem \ref{thm:nocount6}.
Consider the point such that the value on each edge of the octahedron is 0 or 0.5 according to Figure \ref{fig:add6} and the value on each of the other three edges is 1 on the fifth day and 0 on the other day.
Then that point becomes a non-integral vertex of $P(K_{|V|},HA)$ because this point cannot move in the polytope when the components on the fifth day are fixed.
\end{proof}

So $P(K_{|V|},HA)$ is not an integral polytope although it contains an integral point.

\subsection{Monoid corresponding to a graph}

In this subsection, we introduce the monoid corresponding to a regular graph, and see a few examples of B-factorizable graphs.

In the case of $|V|=8$, the number of generating vectors of $\overline{\mathbf N}(PM(V))$ is so large that it is difficult to calculate its Hilbert basis by normaliz.  

To reduce the size of the problem, we consider the monoid corresponding to a regular graph. We consider the vectors $v\in PM(V)$ such that $E(v)$ is a matching of the given graph. That is, for a regular graph $(V,E)$, we consider the monoid $\overline{\mathbf N}(\{v\in PM(V)|E(v) \subset E\})$. We can rather easily calculate the Hilbert basis of the monoid corresponding to a regular graph with less edges than that corresponding to the complete graph because it has a smaller generating set. By calculating the Hilbert basis of the monoid for a regular graph, we may find an additional generator $v$ such that $E(v)$ is a subgraph of the given graph. If an additional generator $v\in \mbox{problem}(V)$ is found, graph $(V,E(v))$ turns out not to be B-factorizable by Lemma \ref{lem:addb}. If there exists no additional generator, the graph turns out to be B-factorizable by Lemma \ref{lem:countha}.

We take a glance at a few generators of the monoid corresponding to the graph of 3-dimensional cube. The upper part of Figure \ref{fig:cubeha} illustrates the equal partitions on the graph of 3-dimensional cube. There exist 6 types of equal partitions up to isomorphism. In the figure, small circles indicate home teams. For example, the second equal partition in the figure has three perfect matchings illustrated in the lower part of the figure. Each of them has the corresponding vector in $PM(V)$.

\begin{figure}[ht]
\begin{center}
\includegraphics[width=15cm]{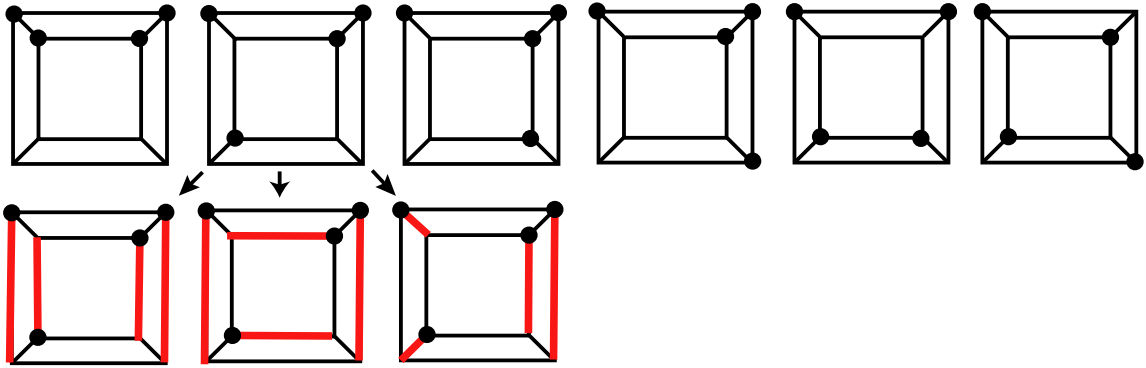}
\caption{Equal partitions on graph of 3-cube and generators of monoid}\label{fig:cubeha}
\end{center}
\end{figure}

We made sure as follows that the graph of the 3-dimensional cube is B-factorizable. Consider the vectors $v \in PM(V)$ such that $E(v)$ is a matching of the graph of 3-dimensional cube. We calculated the Hilbert basis of the monoid generated by these vectors.
We found that there exists no additional generator in the monoid. So the graph of the 3-dimensional cube turns out to be B-factorizable by Lemma \ref{lem:countha}.

\begin{figure}[ht]
\begin{center}
\includegraphics[width=3cm]{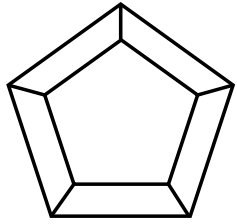}
\caption{Graph of pentagonal prism}\label{fig:candp}
\end{center}
\end{figure}

Similarly, we made sure that the graph of the pentagonal prism, illustrated in Figure \ref{fig:candp}, is B-factorizable by calculating the Hilbert basis of the monoid corresponding to the graph.

We may conjecture the following.

\begin{con}
The graph of every $k$-gonal prism is B-factorizable.
\end{con}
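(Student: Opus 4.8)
The final statement is a conjecture, so I outline how I would attack it. First I would reduce it to a statement about $3$-edge-colorings. Write $G_k$ for the $k$-gonal prism, viewed as two $k$-cycles $u_1\cdots u_k$ and $w_1\cdots w_k$ joined by the rungs $u_iw_i$, and say an edge \emph{crosses} an equal partition $c$ if its two ends lie on opposite sides of $c$. By Lemmas~\ref{lem:nonint} and~\ref{lem:int}, $G_k$ is B-factorizable exactly when $P(G_k,HA)$ contains an integral point whenever it is non-empty, for every HAP table $(H,A):\{1,2,3\}\to C$; equivalently, since $G_k$ is cubic and every regular subgraph of a cubic graph on the same vertex set is empty, a perfect matching, a $2$-factor, or the whole graph --- and $0$-, $1$-, and $2$-regular graphs are all B-factorizable (the last is noted in the text; for a perfect matching $E$, any $v\in\overline{\mathbf N}(PM(V))$ with $v|_K=\chi_E$ has a single nonzero HA component $v(c_0)=1$, and $v\in\overline{\mathbf N}(PM(V))$ forces every edge of $E$ to cross $c_0$, so $v\in PM(V)$) --- Lemmas~\ref{lem:addb} and~\ref{lem:countha} reduce the conjecture to the assertion that $\overline{\mathbf N}(PM(V))$ has no additional generator $v$ with $E(v)=E(G_k)$. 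Now an integral point of $P(G_k,HA)$ is precisely a partition of $E(G_k)$ into perfect matchings $M_1,M_2,M_3$ with every edge of $M_d$ crossing $c_d:=\{H(d),A(d)\}$; so the content to prove is that feasibility of $P(G_k,HA)$ already yields such a partition.

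Next I would rephrase this as a list edge-coloring problem. The HAP table gives each edge $e$ of $G_k$ the list $D(e)=\{d : e \mbox{ crosses } c_d\}\subseteq\{1,2,3\}$, and the sought partition is a proper $3$-edge-coloring $\phi$ of $G_k$ with $\phi(e)\in D(e)$ for all $e$. Any point of $P(G_k,HA)$ restricts, on each day $d$, to a fractional perfect matching of $G_k$ supported on $\{e : d\in D(e)\}$, so feasibility forces each of these three edge sets to span $V$ and to decompose into single edges and odd cycles. The task is then to upgrade this fractional feasibility to an honest list coloring.

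The heart of the argument would exploit the rigidity of prisms. A perfect matching of $G_k$ is determined by the set $S\subseteq\{1,\ldots,k\}$ of indices $i$ for which it uses the rung $u_iw_i$: the remaining vertices must be matched inside the two $k$-cycles, which forces every maximal arc of $\{1,\ldots,k\}\setminus S$ to have even length and fixes the matching on each such arc (with two extra rungless matchings when $k$ is even). Hence a partition of $E(G_k)$ into three perfect matchings is controlled by a $3$-coloring of the rungs that then propagates along the top and bottom cycles. My plan would be to traverse the cycle, read off from the lists $D$ which rungs are forced to a single day and which are free, and build $\phi$ arc by arc, via an induction on $k$ in which a forced rung together with the cycle edges it forces is suppressed to produce a feasible HAP instance on a smaller prism (the one unreduced case being the all-rungs matching, where the problem decouples onto the two $k$-cycles); the bipartite case ($k$ even) and the non-bipartite case ($k$ odd) would be handled by parallel sub-arguments.

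I expect the main obstacle to be precisely this last step: converting the merely fractional per-day information supplied by $P(G_k,HA)\neq\emptyset$ into the integral combinatorial data the induction consumes, while keeping the case analysis uniform in $k$. The likely key is a single invariant --- for instance a parity count along each cycle, or a flow/potential quantity --- that simultaneously locates the forced rungs, certifies that propagation along the cycles never violates the lists, and transforms correctly under deletion of a rung. Arranging for such an invariant to rule out the ``frustrated'' configurations that cause antiprisms to fail (Theorem~\ref{thm:antiprism}) is where the real difficulty lies.
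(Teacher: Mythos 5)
There is a genuine gap here, and in fact there has to be: the statement you are proving is stated in the paper as an open conjecture. The paper gives no proof at all --- it only verifies B-factorizability of the triangular prism (via the full Hilbert-basis computation for $|V|=6$), the $3$-cube, and the pentagonal prism by running normaliz on the monoid generated by $\{p\in PM(V): E(p)\subseteq E\}$, and explicitly says the general case is unsolved. Your reduction part is sound and consistent with the paper's framework: using Lemmas~\ref{lem:nonint}, \ref{lem:int}, \ref{lem:addb} and \ref{lem:countha}, together with B-factorizability of $1$- and $2$-regular graphs, it is correct that the conjecture is equivalent to the nonexistence of an additional generator $v$ with $E(v)=E(G_k)$, i.e.\ to upgrading feasibility of $P(G_k,HA)$ to a proper $3$-edge-coloring respecting the lists $D(e)$. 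But that upgrading step is precisely the whole content of the conjecture, and your proposal does not supply it; you acknowledge this yourself, which makes the submission a research plan rather than a proof.

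Two concrete points where the plan, as described, would run into trouble. First, the rung-suppression induction does not obviously stay inside the problem class: a HA component is an equal partition of the \emph{entire} vertex set, and if the rung $u_iw_i$ is forced to day $d$, then $u_i$ and $w_i$ lie on opposite sides of $c_d$ but need not lie on opposite sides of $c_{d'}$ for $d'\neq d$; deleting these two vertices then destroys the equal-partition property of the other two days, and deleting them from $G_k$ leaves two paths rather than a smaller prism, so the ``feasible HAP instance on a smaller prism'' has to be constructed, not just asserted. Second, the invariant you hope for must genuinely use prism structure, since Theorem~\ref{thm:antiprism} and Example~\ref{ex:cubicbib} show that fractional feasibility (even a double-edge covering compatible with the HAP table) does not imply integral feasibility for closely related $4$-regular and cubic bipartite graphs; nothing in your sketch yet isolates what property of prisms excludes the frustrated configurations that occur there. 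Until such an invariant and a valid induction step are exhibited, the statement remains exactly where the paper leaves it: a conjecture supported by machine computation for small $k$.
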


This conjecture seems to be easier than Conjecture \ref{con:dirk} because we have only to consider a HAP table for three days. But we have not solved it yet.

\subsection{B-factorizability for antiprism graphs}

For $|V|=8$, we found an example of graph which is not B-factorizable. In the next example, we show that the antiprism graph on 8 vertices is not B-factorizable. The antiprism is a circular graph in which $i$th and $(i+1)$th vertices are adjacent, and $i$th and $(i+2)$th vertices are adjacent for all $i$.

\begin{ex}\label{ex:anti8}
Consider $v\in \mbox{problem}(V)$ represented in Figure \ref{fig:anti8}. Then $E(v)$ is the antiprism graph on 8 vertices. Recall that the HA components of $v$ are described by the small circles near each vertex in the figure. The existence of a double-edge covering of this graph in the figure  indicates $2v\in {\mathbf N}(PM(V))$ and $v\in \overline{\mathbf N}(PM(V))$. Note that two perfect matchings in the same day are painted in the same color. 
Vector $v$ does not belong to ${\mathbf N}(PM(V))$ because of the following reason. We consider an intersection graph defined as follows for these eight perfect matchings. The vertices of the intersection graph correspond to $\{p\in PM(V)| p \leq v\}$. Two vertices $p_1$ and $p_2$ of the intersection graph are adjacent if the support of $p_1$ intersects with the support of $p_2$. In other words, two vertices of the intersection graph are adjacent if they have a common edge in the edge components or their dates are the same. Then there exists no stable set of size 4 in the intersection graph. This fact means that there exist no 4 perfect matchings disjoint with each other in our sense. That is, there exists no 4-edge-coloring using these matchings. So we have $v\notin {\mathbf N}(PM(V))$. So $v$ is an additional generator such that $E(v)$ is the antiprism on 8 vertices. Therefore the antiprism on 8 vertices is not B-factorizable by Lemma \ref{lem:addb}.

\begin{figure}[ht]
\begin{center}
\includegraphics[width=5cm]{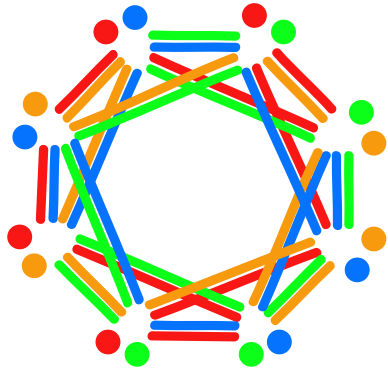}
\caption{Antiprism on 8 vertices}\label{fig:anti8}
\end{center}
\end{figure}
\begin{figure}[ht]
\begin{center}
\includegraphics[width=4cm]{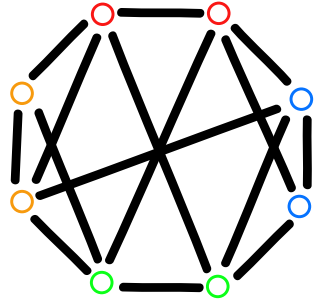}
\caption{Intersection graph of matchings}\label{fig:inter}
\end{center}
\end{figure}

\end{ex}

Recall that the non-B-factorizable graph on 6 vertices shown in Example \ref{ex:sixcrown} is also an antiprism. Generally, the following statement holds.

\begin{thm}\label{thm:antiprism}
Any antiprism on even vertices is not B-factorizable.
\end{thm}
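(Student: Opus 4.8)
\medskip
\noindent\textbf{Proof sketch.}
Write $A_{2n}$ for the antiprism on the vertex set $V={\mathbf Z}_{2n}$, so that $E=\{\{i,i+1\}:i\in{\mathbf Z}_{2n}\}\cup\{\{i,i+2\}:i\in{\mathbf Z}_{2n}\}$ is a $4$-regular graph with $4n$ edges. For $v\in\mbox{problem}(V)$ with $v|_K=\chi_E$ the balance condition gives $\sum_c v(c)\cdot n=\sum_e v(e)=4n$, hence $\sum_c v(c)=4$: the HAP table has exactly four days (with multiplicity), and, since $\chi_E$ is a simple $0/1$ vector, $v\in{\mathbf N}(PM(V))$ holds precisely when $A_{2n}$ decomposes into four perfect matchings that can be labelled by the four days so that the $d$-th matching is compatible with the $d$-th HA pattern of $v$. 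The plan is to construct, uniformly in $n$, one vector $v$ with $v|_K=\chi_E$ which lies in $\overline{\mathbf N}(PM(V))$ but admits no such decomposition; then $(V,E)$ is not B-factorizable directly from the definition, and, as in Examples \ref{ex:sixcrown} and \ref{ex:anti8}, one can moreover verify that $v$ is an additional generator so that Lemma \ref{lem:addb} applies. This generalizes the computer-checked cases $2n=6$ (Example \ref{ex:sixcrown}) and $2n=8$ (Example \ref{ex:anti8}).

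First I would fix the HAP table by an explicit rule, declaring vertex $i$ a home team on day $d$ by a formula in $i$ and $d$ that is periodic in $i$ (up to a correction depending on $n\bmod 2$), extrapolated from the patterns of small circles in Figures \ref{fig:add6} and \ref{fig:anti8}. For this table I would then exhibit eight perfect matchings of $A_{2n}$, two per day, each compatible with its day's pattern and together covering every edge of $A_{2n}$ exactly twice, by transporting the double-edge coverings of Figures \ref{fig:add6} and \ref{fig:anti8} to this periodic setting; the matchings are assembled from arcs of the short cycle $C_{2n}$ and the long edges $\{i,i+2\}$. This yields $2v\in{\mathbf N}(PM(V))$, hence $v\in\overline{\mathbf N}(PM(V))$, while $v|_K=\chi_E$ and $v\in\mbox{problem}(V)$ are immediate.

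The substantive step is to rule out a HAP-compatible $4$-edge-coloring of $A_{2n}$. I would first classify the perfect matchings of $A_{2n}$ compatible with a single fixed day's HA pattern: the home/away restriction together with $4$-regularity should force such a matching, read along the cycle ${\mathbf Z}_{2n}$, to be built from a short list of local ``tiles'', so that choosing it amounts to choosing one of two local states at each position subject to a propagation rule from position $i$ to $i+1$. Then, as in the intersection-graph argument of Example \ref{ex:anti8}, the conditions ``matchings on the same day share an edge'' and ``matchings on different days are edge-disjoint'' become a system solvable on every proper arc of ${\mathbf Z}_{2n}$ but globally frustrated once the arc is closed into a $2n$-cycle --- the mechanism by which an odd cycle fails to be $2$-edge-colorable. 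Concretely I expect to prove that the intersection graph of $\{p\in PM(V):p\le v\}$ has independence number at most $3$ for every $n\ge 3$, which gives $v\notin{\mathbf N}(PM(V))$.

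The main obstacle is exactly this uniform-in-$n$ argument: the machine checks for $2n=6,8$ give no pattern for free, so one must isolate the right invariant measuring the frustration around the cycle and prove it obstructs a $4$-coloring for all even $2n$. A naive induction on $n$ looks awkward, because $v|_K=\chi_E$ is a global constraint and $A_{2n}$ has no convenient $4$-regular-subgraph copy inside $A_{2n+2}$. A secondary, more bookkeeping difficulty is arranging the periodic HAP table and the eight covering matchings so that a single description --- or two, split on the parity of $n$ --- covers the whole family; matching up the boundary of the repeating block around ${\mathbf Z}_{2n}$ is where the parity of $n$ most likely enters.
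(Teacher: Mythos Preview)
Your outline matches the paper's strategy at the coarsest level --- construct a specific $v$ with $v|_K=\chi_E$, exhibit a double-edge covering to get $v\in\overline{\mathbf N}(PM(V))$, and rule out a compatible $4$-edge-coloring --- but the proposal stops exactly where the content begins. You say you would fix the HAP table ``by an explicit rule\ldots extrapolated from the patterns of small circles in Figures~\ref{fig:add6} and~\ref{fig:anti8}'', and later concede that ``the machine checks for $2n=6,8$ give no pattern for free''. That is the gap: the whole proof is the pattern, and you have not found it.

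The paper's idea is concrete and is what you are missing. Start each day from the alternating arrangement $HAHAHA\ldots$ along $\mathbf Z_{2n}$; for that arrangement the matchable edges are exactly the outer edges $\{i,i+1\}$, since distance-$2$ pairs receive the same letter. Now \emph{swap} $H$ and $A$ on a chosen adjacent pair $\{j,j+1\}$: the effect is a local ``twist'', replacing the two outer matchable edges at $j$ by the two inner edges $\{j-1,j+1\},\{j,j+2\}$, and the matchable edges on that day form a single Hamilton cycle of $A_{2n}$. Hence there are \emph{exactly two} compatible perfect matchings per day, and the set $\{p\in PM(V):p\le v\}$ has precisely eight elements, which is what makes the intersection-graph argument tractable uniformly in $n$. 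The double covering comes from arranging the swaps over the four days so that every outer edge $\{i,i+1\}$ is the site of exactly one swap, subject to the constraint that swaps on the same day sit at distance at least $3$ (else the Hamilton cycle breaks). The paper then treats $2n\equiv 0\pmod 4$ and $2n\equiv 2\pmod 4$ separately, building the general case by adding blocks of four vertices to the explicit patterns for $2n=8$ and $2n=10$. The frustration you were looking for is exactly the cyclic incompatibility of the two matchings in each Hamilton cycle across the four days, and it becomes visible once you know the matchable subgraph is a single Hamilton cycle.

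So the approach is right, but as written the proposal is a plan to search for a construction rather than a proof; the missing ingredient is the swap-from-alternating mechanism, which simultaneously (i) pins down the HAP table, (ii) forces exactly two matchings per day, and (iii) makes the double cover and the obstruction to a $4$-coloring fall out together.
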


\begin{proof}
The period of the schedule that we should consider is four days because the degree of the regular graph of the antiprism is 4. Construct a HAP table for 4 days. Then take $v\in \mbox{problem}(V)$ so that $E(v)$ is the antiprism and the HA components of $v$ correspond to the HAP table that we consider below. We show $v\in \overline{\mathbf N}(PM(V))$ and $v\notin {\mathbf
N}(PM(V))$.

Arrange $k=|V|$ vertices on a circle and number them as $\{0,...,k-1\}$ modulo $k$. First, consider the HAP table of the fixed one day. We begin with the HAP table of the one day whose entries consist of the simple alternation of H and A as HAHAHA...as in Figure \ref{fig:bsc8}. A pair of two vertices is said to be matchable if the pair corresponds to an edge of the graph, and one vertex is assigned to H and the other vertex is assigned to A. 

\begin{figure}[ht]
\begin{center}
\includegraphics[width=4cm]{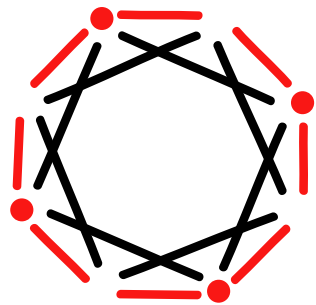}
\caption{Basic H and A arrangement}\label{fig:bsc8}
\end{center}
\end{figure}

For such a HAP table, a matchable pair of teams in the graph must be adjacent on the outer circle because two vertices at distance 2 are assigned to both H or both A. We paint the matchable edges in the figure. Then, we consider a slightly different HAP table from the original one. For example, by swapping H and A between vertices 0 and 1, the matchable pairs of  teams are $\{k-1,1\},\{0,1\},\{0,2\},\{1,3\},\{2,3\},...$.  Then the subgraph that consists of the colored edges has exactly one Hamilton cycle. The Hamilton cycle is twisted at a pair of the vertices whose labels, H and A, are swapped. The number of the perfect matchings which consist of matchable edges is exactly two.

In Figure \ref{fig:thc8}, a small red circle near a vertex means that the team is assigned to a home game on the fixed date. This arrangement is obtained by swapping H and A between the two vertices $\{0,1\}$ from the arrangement of alternating H and A simply.
Then the matchable edges, which connect between H and A vertices, are colored.

\begin{figure}[ht]
\begin{center}
\includegraphics[width=5cm]{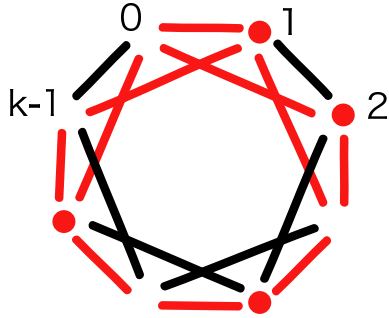}
\caption{Swapping H and A for one pair}\label{fig:thc8}
\end{center}
\end{figure}

We want to make a HAP table by swapping H and A for a pair of adjacent vertices on the outer circle so that it has a Hamilton cycle consisting of matchable edges.
For a fixed day, we cannot swap a pair at a distance of at most 2 from the swapping pair because the subgraph consisting of the matchable edges does not have any Hamilton cycle. For example, we cannot swap H and A between 0 and 1, and simultaneously swap H and A between 2 and 3. The next figure shows an example obtained by swapping H and A for each of pairs at a distance of three. It has a Hamilton cycle of length 8. The Hamilton cycle consists of the edges in two perfect matchings. We regard the distance between pair $\{0,1\}$ and pair $\{2,3\}$ as 2.  As you see, by swapping H and A for some pairs so that each pair of swapping pairs has a distance of at least 3, the number of perfect matchings consisting of matchable edges keeps exactly 2.

\begin{figure}[ht]
\begin{center}
\includegraphics[width=4cm]{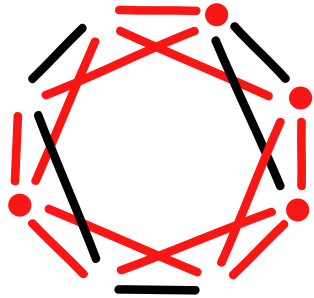}
\caption{Swapping H and A for two pairs}\label{fig:tht8}
\end{center}
\end{figure}

There exist $k$ edges $\{i,i+1\}$ on the outer circle. We want to swap H and A for any adjacent pair on the outer circle at once for the four days so that the antiprism has a double-edge covering consisting of perfect matchings compatible with the HAP table. If such a HAP table exists, we have $2v\in {\mathbf N}(PM(V))$ and $v\in \overline{\mathbf N}(PM(V))$.

We show that we can obtain a desired swapping. 
We have already discussed the antiprism on 6 vertices, which is not B-factorizable, in Example \ref{ex:sixcrown}. So we consider antiprisms on at least 8 vertices in the sequel.

Consider the antiprism on 8 vertices. To twist 8 pairs totally, we have only to twist two pairs per day. You may simply think that it suffices to, for example, twist $\{0,1\}$ and $\{4,5\}$ on the first day, $\{1,2\}$ and $\{5,6\}$ on the second day, $\{2,3\}$ and $\{6,7\}$ on the third day, and $\{3,4\}$ and $\{7,8\}$ on the last day. But this twisting is not suitable because we can take four disjoint matchings compatible with the HAP table and we have $v\in {\mathbf N}(PM(V))$. So we have to consider a bit different HAP table. Twist $\{1,2\}$ and $\{4,5\}$ on the first day, $\{0,1\}$ and $\{5,6\}$ on the second day, $\{2,3\}$ and $\{6,7\}$ on the third day, and $\{3,4\}$ and $\{7,8\}$ on the last day from the original one. Then we have $v\in \overline{\mathbf N}(PM(V))$. Since there exist no 4-edge-coloring compatible with this HAP table, we have 
$v\notin {\mathbf N}(PM(V))$.

Next we consider the case of 10 vertices. We twist pairs $\{0,1\}$, $\{3,4\}$ and $\{6,7\}$ on the first day, and pairs $\{1,2\}$, $\{5,6\}$ and $\{8,9\}$, and pairs $\{2,3\}$ and $\{7,8\}$ on the third day, and pairs $\{4,5\}$ and $\{9,0\}$ on the fourth day. Note that each pair of the adjacent vertices on the outer circle is swapped exactly once. Then we have $2v\in {\mathbf N}(PM(V))$ and $v\in \overline{\mathbf N}(PM(V))$. Since there exists no set of four perfect matchings disjoint with each other in the meaning of Example \ref{ex:anti8}, we have $v\notin {\mathbf N}(PM(V))$.

\begin{figure}[ht]
\begin{center}
\includegraphics[width=5cm]{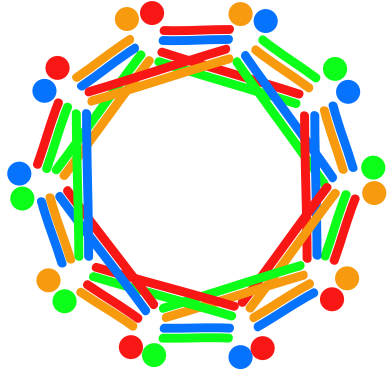}
\caption{Antiprism on 10 vertices}\label{fig:anti10}
\end{center}
\end{figure}

Next we distinguish two cases according to whether or not the number of vertices is a multiple of 4.

Consider the case that the number of vertices is a multiple of 4. Similarly  to the case of 8 vertices, we add 4 vertices repeatedly to the HAP table of the graph on 8 vertices. In the case of $4k$ vertices, we swap H and A for $k$ pairs for each day.

Consider the case that the number of vertices is not a multiple of 4. We add 4 vertices repeatedly to the example on 10 vertices. In the case of $4k+2$ vertices, swap H and A for $k+1$ pairs on the first day and the second day, and $k$ pairs on the third day and the fourth day.

\end{proof}

\subsection{B-factorizability for complete bipartite graphs}

In this subsection, we investigate whether a complete bipartite graph whose partite sets have the same size is B-factorizable or not. 

We already know whether a given $r$-regular graph on 6 vertices is B-factorizable or not by calculating the Hilbert basis of ${\mathbf N}(PM(V))$ on 6 vertices. So the complete bipartite graph $K_{3,3}$ is B-factorizable.

\begin{thm}\label{thm:k44}
The complete bipartite graph $K_{4,4}$ is B-factorizable.
\end{thm}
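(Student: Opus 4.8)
The plan is to apply the same computational strategy used above for the $3$-dimensional cube and the pentagonal prism. Since for $|V|=8$ the full monoid $\overline{\mathbf N}(PM(V))$ is too large to feed to normaliz, I would instead work with the monoid corresponding to $K_{4,4}$, that is $\overline{\mathbf N}(\{v\in PM(V):E(v)\subseteq E(K_{4,4})\})$, compute its Hilbert basis, and check that no additional generator occurs. By the remark following Lemma~\ref{lem:countha} this is enough: if $K_{4,4}$ were not B-factorizable, then Lemma~\ref{lem:countha} would yield an additional generator $v'$ of $\overline{\mathbf N}(PM(V))$ with $E(v')\subseteq E(K_{4,4})$, and since every $p\in PM(V)$ occurring in a representation of some multiple of $v'$ must satisfy $E(p)\subseteq E(v')\subseteq E(K_{4,4})$, such a $v'$ would be a Hilbert basis element of the restricted monoid not lying in $PM(V)$, contradicting the computation.

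First I would pin down the generating set explicitly. Write $V=X\sqcup Y$ with $|X|=|Y|=4$, so that $E(K_{4,4})$ consists of the edges joining $X$ to $Y$. A vector $(q,c)\in PM(V)$ satisfies $E(q)\subseteq E(K_{4,4})$ precisely when every edge of the perfect matching $q$ of $B_c$ joins $X$ to $Y$; writing $c=\{H,A\}$ and $k=|H\cap X|$, this forces $|A\cap Y|=k$ and $|H\cap Y|=|A\cap X|=4-k$, and forces $q$ to split into a perfect matching between $H\cap X$ and $A\cap Y$ together with one between $A\cap X$ and $H\cap Y$. Running over the possible values of $k$ this yields $24+16\cdot 6+18\cdot 4=192$ generators, all supported on the $16$ edges of $K_{4,4}$ and the $35$ equal partitions, so the computation lives in a $51$-dimensional coordinate space. (Any $v$ in this monoid with $v|_K=\chi_{E(K_{4,4})}$ satisfies $\sum_{c}v(c)=4$, reflecting that the schedule runs for four days.)

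Next I would hand this generating set to normaliz and compute the Hilbert basis of the monoid it generates, the goal being to confirm that every Hilbert basis vector already belongs to $PM(V)$. To keep the computation tractable I would exploit the order-$1152$ automorphism group of $K_{4,4}$ acting on $V$, hence on the edge and HA coordinates, so that only orbit representatives of candidate Hilbert basis elements need to be inspected; the monoid is pointed because all its coordinates are nonnegative, so its Hilbert basis is finite and unique.

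The step I expect to be the main obstacle is the Hilbert basis computation itself: even after the reduction to $K_{4,4}$ there are on the order of $200$ generators in a $51$-dimensional lattice, and one must certify carefully both that no genuinely new generator appears and that the symmetry reduction has been applied correctly. Should this prove infeasible, a fallback would be a direct combinatorial argument: a point of $P(K_{4,4},HA)$ is a fractional $4$-edge-coloring of $K_{4,4}$ in which the color-$d$ matching is confined to the block graph $K_{k_d,k_d}\sqcup K_{4-k_d,4-k_d}$ determined by day $d$, and one would try to round it to an integral coloring --- equivalently, to complete an order-$4$ Latin square with the prescribed forbidden positions --- using alternating-path arguments on the support of the fractional solution; but since the analogous statement fails for a cubic bipartite graph (Example~\ref{ex:cubicbib}), such a rounding argument would have to use the HAP structure essentially, and it looks no easier than the computation.
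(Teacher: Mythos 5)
Your proposal follows essentially the same route as the paper: reduce to the monoid generated by the elements of $PM(V)$ supported on $K_{4,4}$ (the same $192$ generators), compute its Hilbert basis with normaliz, and conclude B-factorizability from the absence of additional generators via Lemma~\ref{lem:countha}. The only difference is how the computation is made tractable: where you suggest a symmetry reduction by the automorphism group of $K_{4,4}$, the paper splits into cases according to the three types of equal partitions, using the edge forced by the type with a $3$--$1$ split in each partite set to shrink the generating set before running normaliz.
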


\begin{proof}
The proof relies on the calculation by normaliz.

We may calculate the Hilbert basis of the monoid generated by the vectors in $PM(V)$ such that the edge component corresponds to a perfect matching of $K_{4,4}$. If there exists no additional generator, it turns out that the graph is B-factorizable.

But, the generating vectors of the monoid for the complete bipartite graph $K_{4,4}$ are too large to calculate at one time by normaliz. We divide it into cases to reduce the generating vectors. 

The equal partitions may be classified into three types according to the number of home games in one partite set of the complete bipartite graph as in Figure \ref{fig:bib44}. In the figure, a matchable edge, which connects between H and A, is colored.

\begin{figure}[ht]
\begin{center}
\includegraphics[width=9cm]{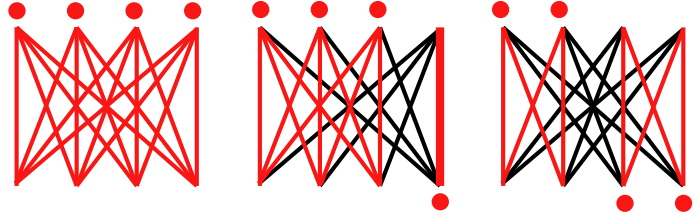}
\caption{Three types of equal partitions of $K_{4,4}$}\label{fig:bib44}
\end{center}
\end{figure}

When the HAP table contains an equal partition of the second type, one edge is always contained in all perfect matchings for the equal partition of the second type. That edge is illustrated as a thick edge in the figure. In this case, we may omit the perfect matchings containing that edge on the other day. So, we calculate, by normaliz, the Hilbert basis of the monoid generated by the vectors in $PM(V)$ such that its edge components do not contain that edge. By calculation, we found that there exists no additional generator in the monoid.

When the HAP table contains no equal partition of the second type, we can calculate, by normaliz, the Hilbert basis of the monoid generated by vectors such that its HA component is an equal partition of the first type or the third type.
By calculation, we found that there exists no additional generator in the monoid.
 
So we confirmed that $v \in \overline{\mathbf N}(PM(V))\cap \mbox{problem}(V)$ such that $E(v)$ is $K_{4,4}$ implies $v \in{\mathbf N}(PM(V))$. So $K_{4,4}$ turned out to be B-factorizable.

\end{proof}

We still do not know whether the complete bipartite graphs with more vertices than $K_{4,4}$ are B-factorizable or not. However, we may conjecture the following.

\begin{con}
Every complete bipartite graph $K_{k,k}$ is B-factorizable.
\end{con}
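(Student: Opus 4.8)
The plan is to use the equivalence recorded in the corollary following Lemma~\ref{lem:int}: $K_{k,k}$ is B-factorizable if and only if, for every HAP table $HA$, the polytope $P(K_{k,k},HA)$ contains an integral point whenever it is nonempty. So the task reduces to proving that this linear system is integral, and the first step is to lay out the structure that completeness provides. Write $V=X\cup Y$ with $|X|=|Y|=k$; for a day $d$ put $X_H^d=H(d)\cap X$, $X_A^d=A(d)\cap X$, $Y_H^d=H(d)\cap Y$, $Y_A^d=A(d)\cap Y$. Balance forces $|X_H^d|=|Y_A^d|$ and $|X_A^d|=|Y_H^d|$, and a perfect matching of $K_{k,k}$ compatible with day $d$ is exactly a perfect matching of the complete bipartite graph $K(X_H^d,Y_A^d)$ together with one of $K(X_A^d,Y_H^d)$. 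Equivalently, writing $h(v)\subseteq\{1,\dots,k\}$ for the set of days on which vertex $v$ is home, a compatible schedule is a proper $k$-edge-colouring of $K_{k,k}$ (a Latin square of order $k$) in which the colour of the edge $\{x,y\}$ is restricted to the list $h(x)\triangle h(y)$, and nonemptiness of $P(K_{k,k},HA)$ says precisely that the fractional version of this list-edge-colouring problem is feasible. It is essential here that the graph is the complete bipartite graph: the cubic bipartite graph of Example~\ref{ex:cubicbib} shows that bipartiteness alone is not enough, so the argument must exploit the abundance of edges of $K_{k,k}$, i.e.\ that on each day every home vertex of $X$ may be matched to every away vertex of $Y$.

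The main line I would pursue is an iterative rounding argument at a vertex of $P(K_{k,k},HA)$. Given a fractional vertex $x$, consider the support hypergraph whose ground set is $\{(\{a,b\},d):x_{\{a,b\},d}>0\}$ and whose hyperedges are the three families of tight constraints (``edge $\{a,b\}$ covered once'', ``$a\in X$ plays once on day $d$'', ``$b\in Y$ plays once on day $d$''). If the support carries an alternating closed walk among these constraint types, push a signed perturbation along it: it stays inside $P(K_{k,k},HA)$ in both directions because, on each day, rerouting within the complete bipartite pieces $K(X_H^d,Y_A^d)$ and $K(X_A^d,Y_H^d)$ automatically respects Condition~4, so choosing the larger admissible step strictly reduces the number of fractional coordinates. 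When no such walk exists the support is forest-like, and one then argues that, together with the three tight constraint families and the day-wise decomposition above, the unique point with that support must already be $\{0,1\}$-valued, i.e.\ a Latin square avoiding the prescribed array. An alternative, more constructive route is induction on $k$: first normalise the HAP table (relabel days, swap H and A, permute inside the parts) so that the patterns $h(v)$ take a canonical nested form; then locate a day on which $K(X_H^d,Y_A^d)$ or its complement is small --- in the extreme case $|X_H^d|=1$ a single edge is forced, exactly the situation handled for $K_{4,4}$ in Theorem~\ref{thm:k44} --- peel off a compatible integral matching containing the forced edges, and show by a Hall-type extension argument that the residual fractional problem is still feasible; the base cases $k=3,4$ are the computations already carried out.

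The hard part, and the reason this is still a conjecture, is twofold. First, the underlying system is genuinely three-dimensional (a planar, Latin-square-type transportation system), hence not totally unimodular, so there is no short integrality argument; the zero pattern coming from the symmetric-difference lists $h(x)\triangle h(y)$ must be used essentially, and must be used in a way that survives whichever perturbation or reduction step one takes. Second, in the peeling approach the residual graph $K_{k,k}\setminus M_d$ is $(k-1)$-regular bipartite but no longer complete, so a naive induction escapes the class for which we know the statement; one must either carry a stronger inductive hypothesis (feasibility of a fractional list-colouring closed under the operation) or run the induction on the HAP table directly rather than on the graph. Producing a peeled matching that simultaneously respects Condition~4, absorbs all forced edges, and preserves fractional feasibility of the remainder is the delicate point: what the proof ultimately needs is a Hall/defect-type criterion guaranteeing such a matching, in the spirit of results on completing Latin rectangles and on Latin squares avoiding prescribed arrays.
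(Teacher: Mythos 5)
The statement you were asked about is one of the paper's open conjectures, not a theorem: the paper establishes B-factorizability only for $K_{3,3}$ (via the Hilbert basis computation on 6 vertices) and for $K_{4,4}$ (Theorem \ref{thm:k44}, which rests on normaliz computations split according to the three types of equal partitions), and then explicitly leaves the general case open. Your text is, accordingly, a programme rather than a proof, and you say so yourself; but beyond that honest caveat, both routes you sketch contain a concrete gap. In the iterative-rounding route, the step you describe --- at a fractional point find an alternating closed walk among tight constraints and perturb in both directions, otherwise conclude from a forest-like support that the point is $\{0,1\}$-valued --- would, if it worked, show that \emph{every} vertex of $P(K_{k,k},HA)$ is integral. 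That is strictly stronger than B-factorizability (which only asks that a nonempty $P(K_{k,k},HA)$ contain some integral point), and it is almost certainly false: the paper itself exhibits, in the proposition following Theorem \ref{thm:nocount6}, a HAP table for which the analogous polytope for the complete graph on 6 vertices has a non-integral vertex while still containing an integral point, and the day-indexed system you set up is a three-index (Latin-square / planar transportation) system, whose polytopes are notoriously non-integral. At a genuine fractional vertex no two-sided perturbation can exist, by the definition of a vertex, so the clause "when no such walk exists the support is forest-like and the point is integral" is precisely the unproved content; nothing in the sketch uses the lists $h(x)\triangle h(y)$ in a way that excludes such vertices.

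For the inductive/peeling route, the gap you name is the decisive one: after peeling off a compatible matching the residual graph is $(k-1)$-regular bipartite but no longer complete bipartite, and the paper's Example \ref{ex:cubicbib} shows that regular bipartite graphs in general can fail to be B-factorizable, so the induction cannot rest on "the residual graph is B-factorizable"; you would need a criterion guaranteeing that the particular residual instances arising from a complete bipartite start remain feasible, and no such Hall/defect-type criterion is formulated, let alone proved. The base cases $k=3,4$ you invoke are exactly the machine-verified cases already in the paper, so nothing beyond the paper's own evidence is added. The parts of your proposal that are solid --- the reduction to the polytope statement via the corollary to Lemmas \ref{lem:nonint} and \ref{lem:int}, the balance counts $|X_H^d|=|Y_A^d|$, $|X_A^d|=|Y_H^d|$, and the reformulation as list-edge-colouring of $K_{k,k}$ with colour lists $h(x)\triangle h(y)$ --- are correct and potentially useful framings, but neither proposed argument closes the conjecture, and the first, as stated, aims at a statement that is false.
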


\subsection{B-factorizability for cubic graphs}

What class of regular graphs consists of B-factorizable graphs? Is any cubic graph B-factorizable? The answer is no. There exists a cubic graph which is not B-factorizable. Moreover we can make a cubic bipartite graph which is not B-factorizable. In this subsection, we investigate B-factorizability for cubic graphs.

\begin{ex}\label{ex:petersen}
This example shows that the Petersen graph is not B-factorizable. 
Consider $v\in \mbox{problem}(V)$ represented in Figure \ref{fig:pet3}. Then $E(v)$ is the Petersen graph. Since there exists a double-edge covering of this graph compatible with the HAP table, we have $2v \in{\mathbf N}(PM(V))$ and $v \in\overline{\mathbf N}(PM(V))$. On the other hand, $v\notin{\mathbf N}(PM(V))$ because the Petersen graph does not have any 3-edge-coloring. Therefore $v$ becomes an additional generator, and the Petersen graph is not B-factorizable.

\begin{figure}[ht]
\begin{center}
\includegraphics[width=5cm]{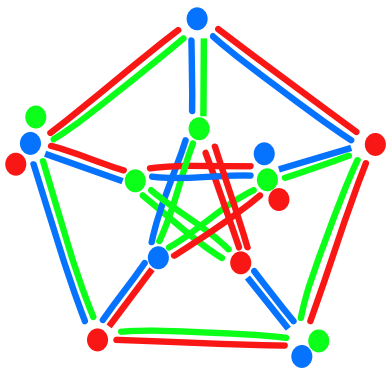}
\caption{Petersen graph with a double-edge covering}\label{fig:pet3}
\end{center}
\end{figure}

\end{ex}

The following example shows that the converse of Lemma \ref{lem:addb} does not hold. This example is not a cubic graph but related with the Petersen graph, which is cubic.

\begin{ex}\label{ex:petersen1}
Consider $v\in \mbox{problem}(V)$ such that $E(v)$ is the graph expressed in Figure \ref{fig:pet2} and the HA components are determined by the colors of the small circles in the figure. The perfect matchings in the figure form a double-edge covering of the graph. So it turns out that $2v\in {\mathbf N}(PM(V))$ and $v\in \overline{\mathbf N}(PM(V))$. 

The edges colored orange cannot be colored other than orange.  Note that we obtain the Petersen graph by removing the orange edges from the graph.
So there exists no 4-edge-coloring compatible with the HAP table. Therefore we have $v\notin{\mathbf N}(PM(V))$. So the graph corresponding to the figure is not B-factorizable. 

We divide $v$ into $v_1=(\mbox{the perfect matching colored orange, orange day in the HAP table})$ and $v_2$, corresponding to the Petersen graph, so that $v=v_1+v_2$. Then $v_1$ and $v_2$ belong to 
$\overline{\mathbf N}(PM(V))$. So $v$ is not an additional generator since $v$ does not belong to the Hilbert basis.

\begin{figure}[ht]
\begin{center}
\includegraphics[width=5cm]{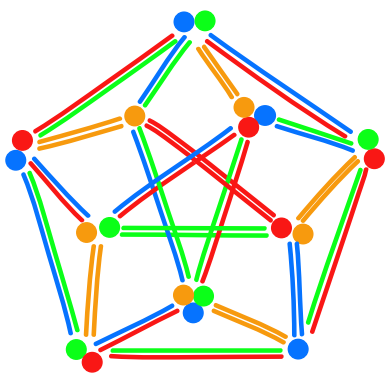}
\caption{Non-B-factorizable graph without additional generators}\label{fig:pet2}
\end{center}
\end{figure}
\end{ex}

The next example shows that there exists a 3-regular bipartite graph which is not B-factorizable.

\begin{ex}\label{ex:cubicbib}

Consider the cubic bipartite graph on 20 vertices illustrated in Figure \ref{fig:3reg}. Consider the vector $v\in\mbox{problem}(V)$ represented by the graph and the colored small circles in the figure.
Since there exists a double-edge covering of the graph compatible with the HAP table, we have $2v\in {\mathbf N}(PM(V))$ and $v\in \overline{\mathbf N}(PM(V))$. 

Next, we consider whether $v$ belongs to ${\mathbf N}(PM(V))$ or not. We try to make a 3-edge-coloring of the graph compatible with the HAP table. It is easy to see that any edge that has double edges in the same color cannot have another color in any 3-edge-coloring. Therefore the possible colors of each of the other edges are narrowed down to two colors. But we cannot complete edge-coloring by picking up one color from the possible two colors. Choosing one color in the two colors induces a contradiction. So $v$ does not belong to ${\mathbf N}(PM(V))$. So this graph is not B-factorizable. Since $v$ cannot be divided into two distinct vectors in $\overline{\mathbf N}(PM(V))$, $v$ is an additional generator.

\begin{figure}[ht]
\begin{center}
\includegraphics[width=8cm]{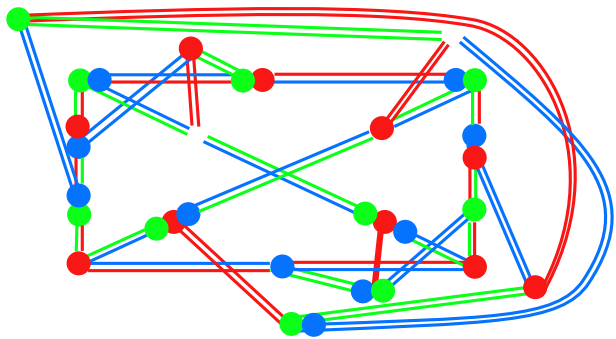}
\caption{Cubic bipartite graph which is not B-factorizable}\label{fig:3reg}
\end{center}
\end{figure}
\end{ex}

\section{Concluding remarks}

To summarize our discussions so far, the B-factorizability for a $k$-regular graph with a HAP table is related with $k$-edge-coloring compatible with the HAP table. Moreover, we point out here that it may be related with the arguments about the $k$-edge-colorability for $k$-regular graphs in the standard graph theory, i.e. without the HAP table, though it is not known that one statement implies directly another statement. 

\begin{con}(\cite{goddyn})
For any graph, the characteristic vectors $\chi_q$ such that $q$ is a perfect matching form the Hilbert basis of the monoid generated by them when the graph does not have the Petersen graph as a graph minor.
\end{con}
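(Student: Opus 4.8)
A natural strategy for attacking this conjecture is the following. The first step is to reformulate it as an edge-colouring statement. By Edmonds' perfect-matching polytope theorem, for a graph $G=(V,E)$ a nonnegative real vector $w$ lies in the cone generated by the vectors $\chi_q$, with $q$ ranging over the perfect matchings of $G$, if and only if there is a real $\lambda\ge 0$ with $\sum_{e\ni v}w_e=\lambda$ for every $v\in V$ and $\sum_{e\in\delta(S)}w_e\ge\lambda$ for every odd subset $S\subseteq V$. For integral $w$ one then forms the multigraph $G_w$ by replacing each edge $e$ with $w_e$ parallel copies: the two conditions above say exactly that $G_w$ is a $\lambda$-graph, that is, $\lambda$-regular with every odd edge cut of size at least $\lambda$, and $w$ being a nonnegative integral combination of the $\chi_q$ is equivalent to $G_w$ being $\lambda$-edge-colourable. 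Since $G_w$ is a subgraph of $G$ with some edges duplicated, it has no Petersen minor once $G$ has none, and conversely the underlying simple graph of any $\lambda$-graph is Petersen-minor-free if the $\lambda$-graph is. Hence the conjecture is equivalent to the assertion that \emph{every $\lambda$-graph with no Petersen minor is $\lambda$-edge-colourable}.

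In this form the cases $\lambda\le 2$ are immediate: a $1$-graph is a perfect matching, and a $2$-graph is a disjoint union of even cycles. The case $\lambda=3$ is precisely Tutte's conjecture that every bridgeless cubic graph with no Petersen minor is $3$-edge-colourable, a theorem of Robertson, Sanders, Seymour and Thomas. So the base of an induction on $\lambda$ is already in hand, and the plan is to bootstrap from $\lambda=3$ to all $\lambda$ by the following \emph{peeling step}: every $\lambda$-graph $H$ with no Petersen minor and $\lambda\ge 4$ has a perfect matching $M$ for which $H-M$ is again a $(\lambda-1)$-graph. Granting this, $H-M$ is a Petersen-minor-free $(\lambda-1)$-graph, hence $(\lambda-1)$-edge-colourable by the inductive hypothesis, and adjoining $M$ as a new colour class $\lambda$-edge-colours $H$.

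To establish the peeling step I would first dispose of the bipartite case, where $H$ is $\lambda$-edge-colourable directly by K\"onig's theorem, so any colour class serves as $M$. For the general case one cuts $H$ along a maximal laminar family of tight cuts and uses the brick--brace decomposition of matching-covered graphs (Lov\'asz; Edmonds--Lov\'asz--Pulleyblank): the braces produced are bipartite and handled as above, so the problem localizes to a single brick. A perfect matching $M$ preserves the $\lambda$-graph property exactly when it meets every minimum odd cut, those of size $\lambda$, in exactly one edge and meets the slightly larger odd cuts in suitably few edges; in a brick the minimum odd cuts are essentially the vertex stars $\delta(v)$, which every perfect matching crosses once, so the real difficulty is the behaviour of $M$ on the odd cuts whose size lies just above $\lambda$. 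This is exactly where the exclusion of the Petersen graph is forced upon us: the Petersen graph itself, with $\lambda=3$, shows the peeling step can fail, since deleting any perfect matching leaves two $5$-cycles, which do not form a $2$-graph.

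The main obstacle, and the reason the statement remains conjectural, is precisely this peeling step inside a Petersen-minor-free brick for $\lambda\ge 4$. I see no soft argument here: one seems to need either a structure theorem for Petersen-minor-free bricks, playing the role that the structural input plays in the Robertson--Sanders--Seymour--Thomas argument, or an induction that reduces the multicoloured $\lambda$-regular setting directly to the cubic case while preserving the absence of a Petersen minor. Even the reduction to bricks must be carried out with care, since contracting the two sides of a tight cut can create new minors. I would mine the work of de Carvalho, Lucchesi and Murty on the matching lattice and on bricks free of the Petersen graph, which already isolates the Petersen-brick phenomenon, for the required structural lemma; and failing a full structure theorem, a sensible intermediate target would be the planar, and then the projective-planar, case, where the analogue of Tutte's theorem is the four-colour theorem and where the $\lambda$-graph version for planar graphs is comparatively well understood.
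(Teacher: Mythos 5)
There is a genuine gap here, and in fact the gap is the entire content of the statement: this is Goddyn's conjecture, which the paper does not prove but merely cites in the concluding remarks as background, so there is no ``paper proof'' to match. Your opening reduction is correct and standard: by Edmonds' characterization of the perfect matching cone, the Hilbert-basis statement for Petersen-minor-free graphs is equivalent to the assertion that every $\lambda$-graph ($\lambda$-regular multigraph with all odd cuts of size at least $\lambda$) whose underlying simple graph has no Petersen minor can be partitioned into $\lambda$ perfect matchings, and the case $\lambda=3$ is the Robertson--Sanders--Seymour--Thomas theorem quoted in the paper. But your induction collapses at the peeling step: the claim that a Petersen-minor-free $\lambda$-graph with $\lambda\ge 4$ has a perfect matching whose removal leaves a $(\lambda-1)$-graph is not a lemma you can hope to establish by the soft tools you list --- it is essentially equivalent to the conjecture itself (given the conjecture, any colour class works; given peeling, induct), and the analogous ``remove a perfect matching and stay an $r$-graph'' statement is a well-known open problem even in settings where much more structure is available. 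The brick--brace reduction you sketch also does not go through as stated, as you yourself note: tight cut contractions can create new minors, and in a brick the odd cuts of size slightly above $\lambda$ are exactly where no control is known. So what you have written is a reasonable research programme (reformulation, base case, identification of the hard step), not a proof; the statement remains open, and only low-$\lambda$ and planar-type special cases are settled in the literature.
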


The statement of this conjecture is stronger than the following result.

\begin{thm}(\cite{RSST})
Every bridgeless cubic graph without a Petersen minor is 3-edge-colorable.
\end{thm}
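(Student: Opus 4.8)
The plan is to follow the strategy of Robertson, Sanders, Seymour and Thomas, which reduces the statement to the Four Color Theorem by way of a structural classification of cubic graphs with no Petersen minor. The anchoring observation is Tait's classical equivalence: $3$-edge-colourability of a bridgeless \emph{planar} cubic graph is precisely the $4$-colourability of the faces of a plane embedding, i.e. the Four Color Theorem. Hence all the work lies in controlling the non-planar cubic graphs that still avoid the Petersen graph as a minor.

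First I would reduce to highly connected graphs. Take a counterexample with $|V|$ minimum. It can have no bridge by hypothesis; it can have no $2$-edge-cut, since contracting one side gives a smaller bridgeless cubic graph with no Petersen minor, whose $3$-edge-colouring pulls back; and it can have no nontrivial $3$-edge-cut, since one may colour the two sides separately and glue, using that the three cut edges receive three distinct colours on each side (a parity argument lets the colourings be matched up). Analogous manipulations remove the relevant small $4$-edge-cuts. After these reductions the graph is internally $4$-connected, in fact cyclically $5$-edge-connected, and no Petersen minor persists throughout.

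The core step is the structure theorem: every cyclically $5$-edge-connected cubic graph with no Petersen minor is planar, or \emph{apex} (deleting some vertex leaves a planar graph), or \emph{doublecross} (embeddable in the plane with all crossings confined to two faces in a prescribed pattern), or else one of a short list of sporadic graphs. I would establish this by induction on $|V|$ using excluded-minor machinery: in a minimal counterexample, locate a useful small separation or a large planar ``patch'', and argue that the absence of a Petersen minor forces the global embedding into one of these shapes. This is by far the hardest part, being essentially a piece of graph-minors structural analysis, and it is where essentially all the difficulty of the theorem is concentrated; everything else is either routine reduction or a reduction to the Four Color Theorem.

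Finally I would colour case by case. In the planar case, apply Tait and the Four Color Theorem directly. In the apex and doublecross cases, contract the planar part to an auxiliary plane (multi)graph, $4$-colour its faces by the Four Color Theorem, and lift the resulting $3$-edge-colouring, checking that the crossings (respectively the apex vertex) can be accommodated because the structure theorem has pinned down the local configuration there, leaving only a bounded amount of case analysis. The finitely many sporadic graphs are verified by hand. The main obstacle, to reiterate, is the structure theorem; the colouring step, though delicate near the crossings, is conceptually just a careful bookkeeping around the Four Color Theorem.
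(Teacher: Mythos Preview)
The paper does not prove this theorem at all: it is quoted in the concluding remarks as a known result from the literature, with the citation \cite{RSST}, and no argument is given. There is therefore nothing in the paper to compare your proposal against.

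That said, your outline is broadly faithful to the strategy actually used by Robertson, Sanders, Seymour and Thomas: reduce to cyclically highly edge-connected cubic graphs, invoke a structure theorem classifying such graphs with no Petersen minor into planar, apex, doublecross, and sporadic cases, and then colour each class via the Four Color Theorem (through Tait's equivalence) together with finite case-checking. Two minor caveats. First, the reference given in this paper, \cite{RSST}, is the 1997 \emph{four colour theorem} paper; the edge-$3$-colouring result for Petersen-minor-free cubic graphs is spread over a separate sequence of papers by the same authors (the apex and doublecross cases in particular were only completed later, partly with Edwards), so your sketch actually points to a larger body of work than the single citation suggests. Second, your description of the low-connectivity reductions is a bit optimistic in places --- for instance, the gluing across a $3$-edge-cut requires more care than ``the three cut edges receive three distinct colours on each side'', since that is exactly what may fail and one must permute colours or argue more carefully --- but these are standard manoeuvres and not where the real content lies, as you correctly identify.
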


The cubic bipartite graph shown in Example \ref{ex:cubicbib} has a Petersen minor. We do not know whether there exists a cubic graph without a Petersen minor that is not B-factorizable.

Every example, appeared in this paper, of a regular graph which is not B-factorizable has a double-edge covering compatible with the HAP table. 
We want to know whether any regular graph which has a fractional covering compatible with the HAP table must have a double-edge covering compatible with the HAP table. That is, we propose the following conjecture.

\begin{con}
For any $v \in \overline{\mathbf N}(PM(V))\cap \mbox{problem}(PM(V))$, $2v\in {\mathbf N}(PM(V))$ holds.
\end{con}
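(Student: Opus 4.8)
The plan is to reduce the conjecture, via Lemma~\ref{lem:nonint}, to a half-integrality statement about the polytopes $P(G,HA)$, and then to attack that statement by a perturbation (uncrossing) argument that exploits the bipartiteness of the ``one-day'' subgraphs.

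First, the reduction. Given $v\in\overline{\mathbf N}(PM(V))\cap\mbox{problem}(V)$, set $G=(V,E(v))$, which is $r$-regular with $r=\sum_{c}v(c)$, and let $HA$ be a HAP table for $r$ days realising the multiplicities $v(c)$. By Lemma~\ref{lem:nonint} the hypothesis $v\in\overline{\mathbf N}(PM(V))$ says precisely that $P(G,HA)\neq\emptyset$. I claim it then suffices that every vertex of $P(G,HA)$ be half-integral (so that $P(G,HA)$, being bounded, contains a half-integral point). Indeed, take a half-integral $x\in P(G,HA)$; for each day $d$ the vector $w_d:=2x_{\cdot,d}$ is $\{0,1,2\}$-valued, supported on the $G$-edges running between $H(d)$ and $A(d)$ (Conditions 1 and 4), and has all vertex-degrees (with multiplicity) equal to $2$ (Condition 2). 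Since that subgraph is bipartite, $w_d$ is a disjoint union of even cycles and doubled edges, hence $w_d=\chi_{q_d^1}+\chi_{q_d^2}$ for two perfect matchings $q_d^1,q_d^2$ of the complete bipartite graph $B_{\{H(d),A(d)\}}$, i.e.\ $(q_d^i,\{H(d),A(d)\})\in PM(V)$. Summing over $d$ and using Condition 1 on the edge components, $\sum_{d}(\chi_{q_d^1,c_d}+\chi_{q_d^2,c_d})=2v$, so $2v\in{\mathbf N}(PM(V))$. This part is routine once half-integrality is available.

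Second, the half-integrality of $P(G,HA)$. After deleting the forced-zero variables, the defining equalities are (i) $\sum_{d}y_{e,d}=1$ for each $e\in E$, and (ii) $\sum_{b}y_{\{a,b\},d}=1$ for each pair $(a,d)$, together with $y\geq 0$. The equalities of type (ii) alone constitute the incidence matrix of the bipartite graph $\Gamma=\bigsqcup_{d}G_d$, where $G_d$ is the bipartite subgraph of $G$ between $H(d)$ and $A(d)$; that sub-system is therefore totally unimodular. The intended route to the full system is the classical one for fractional matchings: suppose $x$ is a vertex with some coordinate not in $\{0,1/2,1\}$, let $F=\{(e,d): x_{e,d}\notin\{0,1\}\}$, and form the auxiliary graph on the node set $\{(a,d)\}\cup\{e\in E\}$ whose edges are the members of $F$, each $(e,d)$ joining $(a,d)$ and $(b,d)$ and also attached to the node $e$. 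Equalities of type (i) and (ii) forbid any node from meeting exactly one member of $F$, so every node has degree $\geq 2$ and the auxiliary graph contains a closed walk; pushing $\pm\varepsilon$ alternately along it, with signs chosen so as to keep every equality satisfied, yields two feasible points averaging to $x$ and contradicts vertexness -- unless the walk is of a rigid ``odd'' type that tolerates only the value $1/2$.

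The hard part is making that last step precise, and it is where the argument can genuinely break. When the closed walk stays inside a single $G_d$ it is an even cycle by bipartiteness, and the $\pm\varepsilon$ perturbation always works, driving such cycles out of the rigid core; the trouble is walks passing through an edge-node $e$, since the two $F$-variables $y_{e,d_1},y_{e,d_2}$ used there lie in different day-graphs $G_{d_1},G_{d_2}$ and are adjusted with opposite signs, so the parity bookkeeping is no longer that of an ordinary graph. One must show that the only closed walks admitting no feasible $\pm\varepsilon$ perturbation are ``odd'' in the appropriate coupled sense and pin the fractional coordinates to $1/2$ -- equivalently, that a maximal linearly independent set of columns of the constraint matrix has a forest-plus-one-cycle-per-tree structure whose cycles force denominators dividing $2$. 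I expect this coupled parity analysis to be the main obstacle; a priori some graph and HAP table might produce a vertex with a coordinate such as $1/3$, which would refute the conjecture, but every example in this paper and the computations on $6$ and $8$ vertices are consistent with half-integrality.
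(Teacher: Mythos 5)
This statement is one of the open conjectures in the concluding remarks; the paper offers no proof of it, so your argument has to stand on its own, and it does not. Your first paragraph is fine and is worth keeping: for $v\in\overline{\mathbf N}(PM(V))\cap\mbox{problem}(V)$ one has $v|_K=\chi_{E(v)}$, Lemma~\ref{lem:nonint} gives $P(G,HA)\neq\emptyset$ for the associated graph and HAP table, and a half-integral point $x$ of $P(G,HA)$ does yield $2v\in{\mathbf N}(PM(V))$, since for each day $2x_{\cdot,d}$ is a $2$-regular bipartite multigraph on the matchable edges and so splits into two perfect matchings of $B_{\{H(d),A(d)\}}$. In fact this equivalence goes both ways: the conjecture is exactly the assertion that a nonempty $P(G,HA)$ always contains a half-integral point. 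So far you have only restated the conjecture in polyhedral language.

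The genuine gap is the second step, and you concede it yourself: the ``coupled parity analysis'' that would force every vertex of $P(G,HA)$ to be half-integral is not carried out, and that is the entire content of the problem. Once the coupling equalities $\sum_d y_{e,d}=1$ are added to the per-day degree equalities, the constraint matrix is no longer the incidence matrix of a bipartite graph; it is a three-index (edge--day) assignment-type system, and for such systems vertices with denominators $3$ and larger are a standard phenomenon, so the rigid closed walks in your auxiliary graph cannot be expected to pin fractional coordinates to $1/2$ on general grounds. Any successful argument would have to exploit the specific structure here (each day's graph is the restriction of a complete bipartite graph $B_{\{H(d),A(d)\}}$ to $E$, and $E$ is regular), and note also that all-vertex half-integrality is strictly stronger than what the conjecture needs (one half-integral point suffices) and may well be false -- the paper already exhibits non-integral vertices of $P(K_6,HA)$, and nothing you have written rules out a vertex with a coordinate $1/3$ in a larger instance, which would break your route without deciding the conjecture either way. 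As it stands, the proposal establishes only the routine equivalence in the first paragraph and leaves the conjecture itself untouched.
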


The Berge-Fulkerson conjecture below seems to be related with the statement above.

\begin{con}(\cite{BF})
If $G$ is a bridgeless cubic graph, then there exist 6 perfect matchings $M_1, M_2, M_3, M_4, M_5, M_6$ of with the property that every edge of $G$ is contained in exactly two of $M_1, M_2, M_3, M_4, M_5, M_6$.
\end{con}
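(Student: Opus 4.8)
The plan is to split the conjecture into a trivial part and a genuinely hard integrality part, and to pin down exactly where the obstruction sits. First I would dispose of the $3$-edge-colorable case. If a bridgeless cubic graph $G$ has a proper $3$-edge-coloring, its three color classes $N_1,N_2,N_3$ are perfect matchings, so taking $M_1=M_2=N_1$, $M_3=M_4=N_2$, $M_5=M_6=N_3$ covers every edge exactly twice. By the theorem of \cite{RSST} quoted above, every bridgeless cubic graph without a Petersen minor is $3$-edge-colorable, so the conjecture is already settled for all Petersen-minor-free graphs. Hence the entire content lies in the snarks, the bridgeless cubic graphs that are not $3$-edge-colorable, and by the same theorem these all contain a Petersen minor.

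Next I would produce the fractional certificate that every bridgeless cubic graph admits, using Edmonds' perfect matching polytope theorem. For such a graph the vector giving weight $1/3$ to each edge satisfies the degree equalities and, since every odd cut in a bridgeless cubic graph has at least three edges, every odd-cut inequality; it therefore lies in the perfect matching polytope and can be written as a convex combination $\sum_i\lambda_i\chi_{M_i}$ of perfect matchings with $\sum_i\lambda_i=1$. Multiplying by $6$ gives $\sum_i(6\lambda_i)\chi_{M_i}=2\chi_E$, a fractional cover in which every edge receives total weight exactly $2$ and the weights sum to $6$. In the monoid language of this paper this says precisely that $2\chi_E$ lies in the rational cone generated by the perfect-matching vectors $\chi_q$; the Berge--Fulkerson conjecture is the assertion that this membership is witnessed integrally, namely that $2\chi_E$ is a nonnegative \emph{integer} combination of six perfect matchings counted with multiplicity.

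The hard part will therefore be exactly the integrality, or rounding, step: passing from the fractional $2$-cover to an integral one of total weight six. This is the same integrality gap between a cone and its monoid that underlies B-factorizability throughout this paper, and one natural hope is to attack it through the Hilbert basis of the perfect-matching monoid, as in the conjecture of \cite{goddyn} stated above. That route, however, only controls Petersen-minor-free graphs, which are already handled by the coloring argument; the snarks all carry a Petersen minor and so fall outside its scope. For the smallest snark, the Petersen graph itself, its six distinct perfect matchings already form a Fulkerson cover, which is encouraging, but no general mechanism is known for propagating such integral covers along the structure of an arbitrary snark, and keeping the total multiplicity pinned at exactly six is an extra rigidity that the fractional argument does not supply. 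I expect this rounding-with-fixed-weight step to be the genuine obstacle; indeed the conjecture remains open precisely here, and I would regard a solution as requiring a new structural handle on snarks rather than a routine refinement of the polytope argument.
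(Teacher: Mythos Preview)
The statement you are addressing is the Berge--Fulkerson conjecture, and the paper does not prove it: it is listed at the very end as a known open problem, with no argument attached. So there is no ``paper's own proof'' to compare against.

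Your write-up is not a proof either, and to your credit you do not pretend otherwise. What you give is a correct triage: the $3$-edge-colorable case is trivial by doubling the color classes; Edmonds' description of the perfect matching polytope together with the fact that every odd cut in a bridgeless cubic graph has at least three edges yields the fractional $2$-cover; and the remaining content is the integrality step for snarks, which you rightly flag as the genuine open problem. That analysis is accurate and well-framed in the monoid language of the paper.

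The only caution is terminological: this should not be labeled a ``proof proposal'' for the conjecture, since your final paragraph explicitly concedes that the decisive step is missing and would require new ideas. As a discussion of the state of the problem it is fine; as a proof it has a gap exactly where everyone else's does.
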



\begin{thebibliography}{1}
\bibitem{dirk0}
D. Briskorn, {Feasibility of Home-Away-Pattern Sets: A Necessary Condition}, available at http://www.bwl.uni-kiel.de/Prod/team/doktoranden/briskorn/HapSetFeasibility2007.pdf, 2007.
\bibitem{dirk}
D. Briskorn, {Feasibility of home-away-pattern sets for round robin tournaments}, Operations Research Letters 36 (2008) 283-284.
\bibitem{normaliz} W. Bruns and R. Koch, Normaliz--a program for computing normalizations of affine semigroups, 1998. Available via anonymous ftp from ftp.mathematik.Uni-Osnabrueck.DE/pub/osm/kommalg/software.
\bibitem{BF} D.R. Fulkerson, Blocking and anti-blocking pairs of polyhedra, Math. Programming 1 (1971) 168-194.
\bibitem{goddyn}
Luis A. Goddyn, {
Cones, Lattices and Hilbert Bases of Circuits and. Perfect Matchings}, in ``Graph Structure Theory'', Contemporary Mathematics 147 (1993), 419-439.
\bibitem{LP}
L. Lovasz and M. D. Plummer. Matching Theory. North-Holland, Amsterdam, 1986.
\bibitem{miya}
R. Miyashiro, H. Iwasaki, and T. Matsui,
 {Characterizing Feasible Pattern Sets
  with a Minimum Number of Breaks
in: E. Burke and P. De Causmaecker (eds.),
Practice and Theory of Automated Timetabling IV,
Selected Revised Papers},
Lecture Notes in Computer Science 2740,
Springer, 2003, pp. 78--99.
\bibitem{RSST}
N. Robertson, D. Sanders, P. Seymour, and R. Thomas, The four-color theorem, J. Combin. 
Theory Ser. B 70 (1997), 2--44.
\bibitem{Werra80}
D. de Werra, {Geograph, Games and Graphs}, Discrete Applied Mathematics 2
	(1980) pp. 327--337.
\bibitem{Werra82}
D. de Werra, {Minimizing Irregularities in Sports Schedules Using Graph
	Theory,} Discrete Applied Mathematics 4 (1982) pp. 217--226.
\bibitem{Werra85}
D. de Werra, {On the Multiplication of Divisions: the Use of Graphs for
	Sports Scheduling,} Networks 15 (1985) pp. 125--136.
\bibitem{Werra88}
D. de Werra, {Some Models of Graphs for Scheduling Sports Competitions,}
Discrete Applied Mathematics 21 (1988) pp. 47--65.
\end{thebibliography}
\end{document}